\documentclass[11pt,a4paper]{article}
\usepackage[T1]{fontenc}
\usepackage{lmodern}
\usepackage[margin=28mm]{geometry}
\usepackage{microtype,amsmath,amssymb,amsthm,mathtools,booktabs,array}
\usepackage{enumitem,fancyvrb}
\usepackage[hidelinks,pdftitle={Crown graphs maximise the representation number of bipartite graphs},pdfauthor={Matthew J. Colbrook and Catherine Drysdale}]{hyperref}
\setlist{itemsep=3pt,topsep=5pt}
\newtheorem{theorem}{Theorem}[section]
\newtheorem{lemma}[theorem]{Lemma}
\newtheorem{proposition}[theorem]{Proposition}
\newtheorem{corollary}[theorem]{Corollary}
\theoremstyle{definition}

\newtheorem{example}[theorem]{Example}
\theoremstyle{remark}

\numberwithin{equation}{section}
\newcommand{\RN}{\operatorname{R}}
\newcommand{\rev}{\operatorname{rev}}
\newcommand{\st}{\ast}
\newcommand{\ceil}[1]{\left\lceil#1\right\rceil}
\title{Crown graphs maximise the representation number\\of bipartite graphs}
\author{Matthew J. Colbrook\thanks{Department of Applied Mathematics and
Theoretical Physics, University of Cambridge, UK.
Email: \href{mailto:m.colbrook@damtp.cam.ac.uk}{\texttt{m.colbrook@damtp.cam.ac.uk}}.}
\and Catherine Drysdale\thanks{School of Mathematical Sciences,
Lancaster University, UK.
Email: \href{mailto:c.drysdale@lancaster.ac.uk}{\texttt{c.drysdale@lancaster.ac.uk}}.}}
\date{}
\begin{document}
\maketitle
\begin{abstract}
The representation number of a graph is the least positive integer $k$ for which its vertices can be arranged in a word, each occurring $k$ times, so that two distinct letters alternate precisely when the corresponding vertices are adjacent. We prove that every bipartite graph on $N\ge9$ vertices has representation number at most $\lceil N/4\rceil$. Together with the known representation number of crown graphs, this settles the conjecture that crowns maximise the representation number among bipartite graphs of the same order. The proof develops a construction of Mozhui and Krishna by reducing the choice of a representing word to an ordering problem for neighbourhoods. We characterise the obstructions to this ordering and use probability estimates to exclude them for all sufficiently large balanced bipartitions. Two finite assertions complete the argument, each established by a checked Boolean unsatisfiability certificate. A refinement of the ordering argument treats the remaining odd part sizes directly. The main theorem and the crown-extremality corollary, including the finite certificate arguments, have also been formalised and verified in Lean~4.
\end{abstract}

\begingroup\small
\noindent\textit{Mathematics Subject Classification (2020).}
Primary 05C62; Secondary 05C35, 68R15.\par
\smallskip
\noindent\textit{Keywords.}
Word-representable graphs, representation number, bipartite graphs,
crown graphs, probabilistic method, computer-assisted proofs.\par
\endgroup

\section{Introduction}
A graph is \emph{word-representable} if there is a word on its vertex set in which two distinct letters alternate precisely when the corresponding vertices are adjacent. For example, the restriction to two adjacent vertices $a,b$ may be $ababab$, whereas $aa$ or $bb$ in the restriction excludes adjacency. A representing word is \emph{$k$-uniform} if every vertex occurs exactly $k$ times, in which case the graph is \emph{$k$-representable}. The \emph{representation number} $\RN(G)$ is the least such positive integer $k$.

Bipartite graphs admit uniform representations, but finding the least number of occurrences of each vertex (the multiplicity) is a different problem. A natural candidate for the largest representation number is the \emph{crown graph} $H_{n,n}$, obtained from $K_{n,n}$ by deleting a perfect matching. Its many edges impose alternation for almost every pair of vertices in opposite parts, while each deleted edge must still be distinguished.

Glen, Kitaev and Pyatkin~\cite{GKP} proved that
\begin{equation}\label{eq:crown}
                  \RN(H_{n,n})=\ceil{n/2},\qquad n\ge5.
\end{equation}
They proposed the corresponding extremal conjecture~\cite[Conjecture~1]{GKP}. Akg\"un, Gent, Kitaev and Zantema~\cite[Conjecture~5]{AKGZ} stated it explicitly: $H_{n,n}$ has the largest representation number among bipartite graphs on $2n$ vertices. We prove the following bound.

\begin{theorem}\label{thm:main}
Let $G$ be a finite simple bipartite graph on $N\ge9$ vertices. Then
\begin{equation}\label{eq:main}
                         \RN(G)\le\ceil{N/4}.
\end{equation}
\end{theorem}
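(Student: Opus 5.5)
We build the representing word explicitly, developing the Mozhui--Krishna construction for bipartite graphs. Let the bipartition be $A\cup B$ with $|A|=a\le|B|=b$. First we reduce to the balanced or near-balanced case. Adding isolated vertices or padding the smaller part only increases $N$ by a controlled amount. Alternatively, we treat unbalanced cases directly, since the relevant quantity will be governed by $\min(a,b)$ or by $\lceil (a+b)/4\rceil$. Set $k=\lceil N/4\rceil$.

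The word has the following shape. Each vertex of $A$ appears once in each of $k$ ``rounds''. The vertices of $B$ are interleaved with these rounds, and the interleaving is governed by linear orders of $A$. Concretely, in the Mozhui--Krishna scheme a vertex $v\in B$ adjacent to a set $N(v)\subseteq A$ must alternate with every $u\in N(v)$ and must fail to alternate with every $u\notin N(v)$. For each round $i$ we fix a permutation $\pi_i$ of $A$. We write the rounds as $\pi_1\pi_2\cdots\pi_k$, possibly with some rounds reversed. Each $v\in B$ is then placed once per round, and its insertion point in round $i$ is chosen so that $N(v)$ is split appropriately.

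The key reduction step is to show the following. If the $A$-vertices can be ordered so that for each $v$ the non-neighbourhood $A\setminus N(v)$ occupies, in at least one pair of consecutive rounds, a ``detectable'' contiguous segment, then a $k$-uniform word exists. Here detectable means a segment where we can put two copies of $v$ with no copy of some $u$ between them, while every neighbour is kept alternating. Each non-neighbour $u$ needs only one failure of alternation, and one round can destroy alternation with a whole interval of $A$. It therefore suffices that the non-neighbourhoods are covered by intervals, using about two intervals per round across $k$ rounds. This gives an ordering problem: find permutations $\pi_1,\dots,\pi_k$ of $A$ such that each set $A\setminus N(v)$ is a union of boundedly many intervals compatible with the rounds. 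Pairs within the same part are handled automatically, since we also make same-part vertices non-alternating by a single reversal, e.g.\ ending with $\rev(\pi_k)$.

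Next we characterise when the ordering fails. Obstructions are specific configurations of neighbourhoods that cannot simultaneously be made interval-like. For large balanced parts we exclude them probabilistically. Take the $\pi_i$ uniformly random, or random within a structured family. Estimate the probability that a given $v$ is not served by any round, and apply a union bound or the Lovász Local Lemma over the $b$ vertices of $B$. With $k\approx N/4\approx a/2$ rounds, each round fails for $v$ with probability bounded away from one. The failure probability is then exponentially small in $k$, which beats the polynomial number of vertices once $N$ exceeds an explicit threshold.

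The main obstacle is making these probability estimates strong enough for an explicit, modest threshold. The remaining finitely many small cases, and the boundary cases with odd part sizes where $\lceil N/4\rceil$ is tight, must be settled separately. For the finite range I would encode ``$G$ has a $k$-uniform representation of the prescribed form'' as a SAT instance over all bipartite graphs, up to isomorphism, that are not covered by the asymptotic argument. Equivalently, I would encode the non-existence of a bad neighbourhood system as an unsatisfiable formula and produce checked UNSAT certificates. For odd sizes I would refine the ordering by letting one round be shared or half-used, which saves the extra $1/4$.
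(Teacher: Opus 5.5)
\textbf{There is a genuine gap in the shape of your word.} In your scheme every vertex of $A$ occurs once per round and every $v\in B$ is inserted once per round. The word is therefore a concatenation of $k$ permutations of $V(G)$. In such a word two letters alternate exactly when they have the same relative order in every round. So $G$ must be the comparability graph of an intersection of $k$ linear orders, which means you are bounding the permutation representation number, not $\RN(G)$.

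This cannot reach the target. In a bipartite graph, a transitive orientation has no directed path of length two, since such a path would force an edge inside a part. So in a connected bipartite graph one part consists of minimal elements and the other of maximal elements. For the crown $H_{n,n}$ with $n\ge3$, the resulting poset is the standard example $S_n$, which has dimension $n$. Your construction therefore needs at least $n=N/2$ rounds on exactly the graphs that make the theorem sharp, where the target is $\lceil n/2\rceil$. No choice of orders, interval covering or probability estimate can repair this.

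Your heuristic that a random round serves a given $v$ with probability bounded away from zero is also unfounded. Alternation with each neighbour must hold in \emph{every} round, and random orders of $A$ violate this with high probability.

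\textbf{The missing idea is to let a block contain two copies of some $A$-vertices.} Pair $A$ as $(x_i,y_i)$ and use the block $W_i=B_{i,0}x_iB_{i,1}y_iB_{i,2}x_iB_{i,3}y_iB_{i,4}$. Its five gaps realise all four adjacency patterns of $b$ with $\{x_i,y_i\}$ at once. The intervening words $D_i$ supply the remaining copies so that different pairs never alternate. This halves the multiplicity and settles all $A$--$A$ and $A$--$B$ pairs.

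What remains is precisely the part you call automatic. The restriction to $B$ is $\pi_1\cdots\pi_k$, so every pair in $B$ must appear in both orders, while each $\pi_i$ is constrained to be nondecreasing in rank. Handling same-part pairs by a reversed permutation, as you propose, costs one more occurrence of every letter. That is exactly the Mozhui--Krishna bound $k+1$ of Lemma~\ref{lem:baseline}, and avoiding it is the content of the theorem.

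The paper avoids it in three steps:
\begin{itemize}
\item Lemma~\ref{lem:orders} shows suitable orders exist if and only if no two completed rank vectors are strictly coordinatewise comparable.
\item The minimal obstructions are classified: a comparable fixed pair, or a one-star row squeezed between two fixed rows.
\item Union bounds are taken over \emph{pairs} (and, for $k=5$, triples) of $B$-vertices under a random orientation or matching of $A$.
\end{itemize}

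The finite step is feasible only because of that classification. The formulas range over families of at most $2k$ neighbourhood masks, for $k=3,4$ only, against explicit obstruction lists; they do not range over all small bipartite graphs.

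Your treatment of the unbalanced and odd cases also lacks an actual mechanism. Padding raises $N$, so the paper pads only when $\lceil (m+n)/4\rceil\ge k+1$ makes Lemma~\ref{lem:baseline} sufficient. For odd, nearly balanced parts, it pairs an added isolated vertex with a carefully chosen $a\in A$ to obtain the distinguished coordinate of Lemma~\ref{lem:special-coordinate}.
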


The theorem includes disconnected graphs, isolated vertices, and vertices with identical neighbourhoods. The two parts may have different sizes. Combining it with~\eqref{eq:crown} gives the extremal statement directly.

\begin{corollary}\label{cor:crown}
For every $n\ge1$,
\[
 \max\{\RN(G):G\text{ is bipartite and }|V(G)|=2n\}
       =\RN(H_{n,n}).
\]
The common value is $2$ for $1\le n\le3$, $3$ for $n=4$, and $\lceil n/2\rceil$ for $n\ge5$.
\end{corollary}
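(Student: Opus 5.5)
The statement to prove is Corollary~\ref{cor:crown}. I would derive it from Theorem~\ref{thm:main} together with~\eqref{eq:crown}, handling small $n$ by hand. The inequality $\ge$ is trivial because $H_{n,n}$ is itself a bipartite graph on $2n$ vertices. So everything rests on the upper bound: every bipartite $G$ on $2n$ vertices satisfies $\RN(G)\le\RN(H_{n,n})$.

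For $n\ge5$ we have $N=2n\ge10\ge9$, so Theorem~\ref{thm:main} gives
\[
\RN(G)\le\ceil{2n/4}=\ceil{n/2}.
\]
By~\eqref{eq:crown} this equals $\RN(H_{n,n})$, so the maximum is attained at the crown and equals $\lceil n/2\rceil$.

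For $1\le n\le4$, i.e.\ $N\le8$, Theorem~\ref{thm:main} does not apply, and I would argue directly.

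\emph{General upper bound.} Every word-representable graph on $N$ vertices is $(N-1)$-representable, and bipartite graphs are comparability graphs, hence permutationally representable. The sharper fact I would use is that a comparability graph of poset dimension $d$ is $d$-representable: concatenate $d$ linear extensions. Bipartite graphs on at most $8$ vertices have small poset dimension, which gives small upper bounds here.

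\emph{Small cases.}
\begin{itemize}
\item For $n\le3$ ($N\le6$): I would show every bipartite graph has $\RN\le2$. Such a graph is a bipartite poset of height $2$ on at most $6$ points, and I expect its dimension to be at most $2$; alternatively one can check this by a short case analysis, or cite the known classification that every graph on at most $5$ vertices, and all relevant bipartite graphs on $6$, is $2$-representable. I also need the crown to attain $2$. $H_{1,1}=\overline{K_2}$ is not a complete graph, so $\RN\ge2$, since $1$-representable means complete; it is $2$-representable via $abba$. The same argument shows $H_{2,2}$ and $H_{3,3}$ are non-complete, so $\RN=2$ once the upper bound holds. (The crown $H_{3,3}=C_6$ has $\RN=2$.)
\item For $n=4$ ($N=8$): $H_{4,4}$ is the cube $Q_3$, whose representation number is known to be $3$ (it is not a permutation graph). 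I need every bipartite graph on $8$ vertices to have $\RN\le3$. Height-$2$ posets on $8$ points have dimension at most $\lfloor 8/2\rfloor=4$ in general, with equality only for the standard example $S_4$. So the bound is $3$ except possibly for $S_4$, whose comparability graph is exactly $H_{4,4}$, and that is known to be $3$-representable.
\end{itemize}

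The main obstacle is the finite range $N\le8$. There I must check carefully that the dimension bounds, namely dimension at most $3$ except $S_4$ on $8$ points and at most $2$ or a direct check on $6$ points, deliver exactly the stated values. If the dimension argument is shaky for $N=6$, I would fall back on an exhaustive computer check over the finitely many bipartite graphs.
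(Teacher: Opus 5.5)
Your treatment of $n\ge5$ is exactly the paper's. The small cases, however, contain a concrete error. For $n\le3$ you expect every height-$2$ poset on at most six points to have dimension at most $2$. That is false, and it fails at the crown itself: $H_{3,3}=C_6$ is the comparability graph of the standard example $S_3$, which has dimension $3$. So no concatenation of linear extensions yields a $2$-uniform word for $H_{3,3}$. Its $2$-uniform words are not concatenations of two permutations; for the $6$-cycle with edges $12,23,34,45,56,61$ one can take $2\,1\,3\,2\,4\,3\,5\,4\,6\,5\,1\,6$.

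The bound $\RN\le2$ on six vertices must therefore come from your fallback, which is what the paper does. It cites the connected-graph classification of Akg\"un, Gent, Kitaev and Zantema. It then extends this to disconnected graphs by concatenating $2$-uniform words of the components, using a repeated permutation for complete components. You should add that concatenation step, since the classification is stated for connected graphs.

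For the same reason, your parenthetical justification of $\RN(H_{4,4})=3$ does not work. Not being a permutation graph only shows that a representation by concatenated \emph{permutations} needs at least three of them. The $2$-representable graphs are the circle graphs, a strictly larger class, as $C_6$ again shows. The lower bound has to be cited, as in the paper.

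For the upper bound at $n=4$ you take a genuinely different route, and it is correct. It relies on Hiraguchi's inequality and the Bogart--Trotter theorem that $S_4$ is the only $4$-dimensional poset on eight points. With these, every bipartite graph on eight vertices other than $H_{4,4}$ is permutationally $3$-representable, and $H_{4,4}$ is covered by its known $3$-representability.

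The paper instead notes that one part has at most four vertices, pads it to four with isolated vertices, and applies Lemma~\ref{lem:baseline} with $k=2$. That argument is self-contained, needs no extremal dimension theory, and treats the crown uniformly with every other graph. Your route trades this for external poset results plus a separate citation for the crown.
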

\begin{proof}
For $n\ge5$, Theorem~\ref{thm:main} gives the upper bound, and $H_{n,n}$ attains it by~\eqref{eq:crown}. For the small crown values, see~\cite[Section~1.4]{AKGZ}. For $n=4$, every bipartite graph has a part of size at most four, so Lemma~\ref{lem:baseline}, with isolated vertices added if necessary, gives the upper bound $3$.

For $n\le3$, the connected-graph classification in~\cite[Section~2, Table~1]{AKGZ} gives representation number at most $2$ for every connected bipartite graph on at most six vertices. The only connected exceptions among graphs of that order are the triangular prism and the wheel with a five-vertex rim, both nonbipartite. The bound extends to disconnected graphs by concatenating $2$-uniform representations of their components; letters in different components then fail to alternate. Complete components have $2$-uniform representations obtained by repeating a permutation.
\end{proof}

Corollary~\ref{cor:crown} answers affirmatively the question, ``Is it true that out of all bipartite graphs, crown graphs require longest word-representants?'', with word length understood as the minimum length of a uniform representation. Among bipartite graphs on a fixed even number $N\ge10$ of vertices, this maximum length is $N\lceil N/4\rceil$.

Theorem~\ref{thm:main} and Corollary~\ref{cor:crown} have been formalised in Lean~4 and verified by its kernel. The formal proofs include the obstruction arguments, probability bounds, both finite certificates and the reductions to arbitrary bipartition sizes. The Lean sources, certificate data and verification programs are available in the accompanying repository~\cite{CDrepo}. Appendix~\ref{app:verification} describes its contents and gives reproduction instructions.

\subsection{Previous work and the remaining difficulty}
Kitaev and Lozin~\cite{KL} give a systematic account of word-representable graphs. Related bounds arise by requiring a representing word to be a concatenation of permutations of the whole vertex set. The least number of such permutations is the \emph{permutation representation number}. Mozhui and Krishna~\cite{MKperm} bound this more restrictive invariant using neighbourhood inclusion and its relation to the dimension of partially ordered sets.

Mozhui and Krishna~\cite{MK} established the bound outside the case of two parts of equal even size, and treated several subclasses of that remaining case. Their construction pairs the vertices of one part and builds a word locally around each pair. One additional permutation then prevents vertices in the other part from alternating. This gives a bound one larger than the required value in the unresolved case.

Complementary results of Hefty, Horn, Muir and Owens~\cite{HHMO24} include lower bounds for dense bipartite graphs, an alternative proof of the crown lower bound, and upper bounds in terms of maximum degree. Their subsequent work~\cite{HHMO26} studies lower bounds for random balanced bipartite graphs and hypercubes.

We develop the local construction of Mozhui and Krishna by choosing the orders within each local word. These orders must prevent alternation within the other part without an additional permutation. A neighbourhood assigns to each pair one of three prescribed positions, or a choice between two extreme positions. An ordering lemma shows that these choices succeed precisely when no two completed vectors are strictly ordered in every coordinate. The resulting obstruction problem is much smaller than the original problem of finding a word.

For at least four pairs, every failure is witnessed by either two fixed vectors or three vectors with a single unspecified entry between them. This classification is the main reduction. It explains both why random choices suffice for large graphs and why the remaining finite cases admit short descriptions. Section~\ref{sec:all} completes the proof of Theorem~\ref{thm:main} by treating the other part sizes.

\subsection{Outline of the proof}
Let $A\sqcup B$ be a bipartition. The central case is $|A|=|B|=2k$, where we must construct a $k$-uniform word. The proof proceeds as follows.
\begin{enumerate}[label=(\roman*)]
\item \emph{Replace words by orders.} Pair the vertices of $A$. The local words already handle every pair involving a vertex of $A$. Their restrictions to $B$ are permutations, so the remaining requirement is that each pair in $B$ occurs in both relative orders. Section~\ref{sec:pair} turns this into a condition on vectors of positions.
\item \emph{Identify the obstructions.} Complete each unspecified position by one of two extreme values. A row with two unspecified entries can be dealt with immediately. For the remaining rows, a system of implications gives an exact test. When $k\ge4$, an infeasible system contains a two-row or three-row obstruction (Section~\ref{sec:obstructions}).
\item \emph{Choose a pairing.} For $k\ge8$, any matching of $A$ has a suitable orientation: each pair in $B$ forbids at most two of the $2^k$ orientations. Averaging over matchings improves the estimate for $k=6,7$. For $k=5$, we also use the freedom to choose the extreme values (Section~\ref{sec:prob}).
\item \emph{Resolve the finite cases.} For $k=3,4$, we encode a family obstructing every oriented matching as a Boolean formula and prove that formula unsatisfiable (Section~\ref{sec:finite}). The odd part sizes follow from a refinement of the obstruction argument, including the case of a five-vertex part.
\end{enumerate}

The division into cases reflects the number of available choices. Under the simplest choice of extreme values, a given pair of neighbourhoods obstructs at most two orientations. Thus the number of potential obstructions grows quadratically in $k$, while the number of choices grows exponentially. This settles all large cases without changing the matching. For small $k$, we also vary the matching and the unspecified entries. The obstruction classification determines how these choices enter the probability estimates and the two finite certificates.

\subsection{Conventions}
All graphs are finite and simple. In the constructions, $V(G)=A\sqcup B$ is a fixed bipartition and $N=|V(G)|$. The restriction $w|_S$ is obtained by deleting from $w$ all letters outside $S$; $\rev(w)$ denotes the reversal of $w$, and $a\sim b$ means that $a$ and $b$ are adjacent. Neighbourhoods are allowed to coincide. An \emph{oriented matching} of $A$ is a partition into pairs together with a first and second vertex in each pair; it is unrelated to an orientation of the edges of $G$.

Deleting letters preserves representation of the induced subgraph. We may therefore add isolated vertices to one part and remove them after constructing a word. A cyclic rotation of a uniform word also preserves its represented graph: a two-letter word with equal multiplicities alternates linearly if and only if it alternates cyclically.

\section{From neighbourhoods to orders}\label{sec:pair}
Given two vertices $x,y\in A$, the word $xyxy$ provides five gaps in which a letter $b\in B$ can be inserted. The middle gap makes $b$ alternate with both $x$ and $y$; the adjacent inner gaps make it alternate with exactly one of them; either outer gap makes it alternate with neither. This is the local construction of~\cite[Section~2]{MK}. The freedom in the outer gaps, and in ordering letters assigned to the same gap, will be used to handle the nonedges within $B$.

Assume $|A|=2k$, with $k\ge3$. Choose an oriented perfect matching $(x_1,y_1),\ldots,(x_k,y_k)$ of $A$. For each $b\in B$, its initial rank in coordinate $i\in\{1,\ldots,k\}$ is
\begin{equation}\label{eq:rank}
 \rho_i(b)=
 \begin{cases}
 \st,&b\not\sim x_i,\ b\not\sim y_i,\\
 1,&b\sim x_i,\ b\not\sim y_i,\\
 2,&b\sim x_i,\ b\sim y_i,\\
 3,&b\not\sim x_i,\ b\sim y_i.
 \end{cases}
\end{equation}
The numbers $0,1,2,3,4$ label the five gaps from left to right. The initial vector is $\rho(b)=(\rho_1(b),\ldots,\rho_k(b))$. A star denotes an unspecified entry, which can independently be replaced by $0$ or $4$. A completion is a vector $r(b)=(r_1(b),\ldots,r_k(b))\in\{0,1,2,3,4\}^k$, with $r_i(b)=\rho_i(b)$ at every nonstarred coordinate and $r_i(b)\in\{0,4\}$ at every starred coordinate. Its entries are the completed \emph{ranks}; they specify insertion positions in the local word. For one pair, the possibilities are
\begin{center}
\begin{tabular}{ccc}
\toprule
Neighbours of $b$ in $\{x,y\}$ & Rank & Restriction to $\{x,y,b\}$\\
\midrule
$\varnothing$ & $0$ or $4$ & $bxyxy$ or $xyxyb$\\
$\{x\}$ & $1$ & $xbyxy$\\
$\{x,y\}$ & $2$ & $xybxy$\\
$\{y\}$ & $3$ & $xyxby$\\
\bottomrule
\end{tabular}
\end{center}
In each coordinate, the letters in $B$ must be listed in nondecreasing rank order. We need to choose these orders so that no pair of letters has the same relative order in every coordinate. Strict inequalities in every coordinate would prevent this. The next lemma shows that they are the only obstruction.

\begin{lemma}[Ordering the rows]\label{lem:orders}
Let $k\ge2$. Suppose vectors $r(b)\in\{0,1,2,3,4\}^k$, $b\in B$, have no strictly coordinatewise comparable pair. Thus no distinct $b,c$ satisfy $r_i(b)<r_i(c)$ for every $i$. There are permutations $\pi_1,\ldots,\pi_k$ of $B$, each nondecreasing in its own rank coordinate, such that every pair of letters occurs in both relative orders among the permutations.
\end{lemma}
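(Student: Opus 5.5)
We construct the permutations explicitly from two fixed linear orders of $B$ and use the hypothesis only to separate pairs that are weakly comparable in every coordinate. Fix an arbitrary linear order $\prec$ on $B$. For each coordinate $i$, let $\pi_i$ list $B$ in nondecreasing order of $r_i$. Ties within a rank class are broken by $\prec$ in some coordinates and by the reverse of $\prec$ in others, to be chosen. The task is then to choose the tie-breaking rule in each coordinate so that every pair $\{b,c\}$ appears in both orders.

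Consider a pair $b\ne c$ and classify it by the sign pattern of $r_i(b)-r_i(c)$ over $i$.
\begin{itemize}
\item If some coordinate has $r_i(b)<r_i(c)$ and another has $r_j(b)>r_j(c)$, the pair is already reversed by $\pi_i$ and $\pi_j$, whatever the tie-breaking.
\item Otherwise, up to swapping $b$ and $c$, we have $r_i(b)\le r_i(c)$ for all $i$. The hypothesis excludes strict inequality everywhere, so there is at least one coordinate with $r_i(b)=r_i(c)$.
\item If $r(b)=r(c)$, every coordinate is a tie. Using $\prec$ in one coordinate and its reverse in another handles the pair; this needs $k\ge2$.
\item In the remaining case, $b$ precedes $c$ in every coordinate where they differ. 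We need one tie coordinate $i$ where the tie-break puts $c$ before $b$.
\end{itemize}

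A single global reversal does not obviously work for the last case, because the required direction depends on the pair. The plan is therefore to make the tie-break order itself depend on the vectors. Take $\prec$ to be a linear extension of the coordinatewise partial order $\le$ on the vectors $r(b)$, with equal vectors ordered arbitrarily. In the last case, $r(b)\le r(c)$ with $r(b)\ne r(c)$, so $b\prec c$. Hence every coordinate using the reversed tie-break, $\succ$, places $c$ before $b$ at a tie.

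It remains to ensure that every tie coordinate of such a pair uses the reversal. The cleanest choice is to use the reverse of $\prec$ in every coordinate for ties, except that the identical-vector case needs one coordinate with $\prec$. To reconcile the two requirements, refine the construction. Let $\prec$ be a linear extension of $\le$ on vectors, and break ties in all coordinates by $\succ$; this handles every weakly comparable, non-identical pair. Then group letters with identical vectors into classes. Within each class, use one order in $\pi_1$ and its reverse in $\pi_2$. This is compatible with the first step because letters in one class are tied in every coordinate, so only their internal order is being modified. Ties between different classes still use $\succ$.

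The main obstacle is verifying that the class-internal modification does not interfere with the pairs from different classes. It does not, because reordering inside a block of consecutive tied letters of one class leaves the relative positions of letters from other classes unchanged. I would check this carefully by noting that each class forms a contiguous block within its rank class whenever the tie-break linear extension $\prec$ keeps identical vectors consecutive, and I would choose $\prec$ so that it does. Each $\pi_i$ is nondecreasing in $r_i$ by construction, which completes the argument.
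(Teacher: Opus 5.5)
Your proof is correct, but it is organised differently from the paper's.

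\textbf{Your route.} You break ties in every coordinate by the reverse of one linear extension $\prec$ of the coordinatewise order. A weakly comparable, non-identical pair $r(b)\le r(c)$ is then separated twice: at a strict coordinate, and at a tie coordinate, which the hypothesis supplies. Identical vectors need a separate patch, reversing their internal order between $\pi_1$ and $\pi_2$, together with the contiguity check you describe.

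That check does go through. Make each class of identical vectors an interval of $\prec$; for instance, sort $B$ lexicographically by $r(b)$, which already extends the coordinatewise order. Then any letter lying between two members of a class in $\pi_i$ has the same $i$th rank and lies between them in $\prec$, so it belongs to the class.

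\textbf{The paper's route.} It singles out the first coordinate. $\pi_1$ breaks ties by decreasing $\sum_{j\ge2}r_j$ and then by a fixed order, and each $\pi_i$ with $i>1$ breaks ties by the reverse of $\pi_1$. One contradiction argument then suffices. If $b$ precedes $c$ everywhere, no coordinate $i>1$ can be a tie, so $r_i(b)<r_i(c)$ for $i>1$. The sum rule then excludes a tie in the first coordinate.

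\textbf{Comparison.} Using $\pi_1$ itself as the reference order handles identical vectors automatically and avoids the contiguity bookkeeping. Your symmetric version makes clearer exactly where the hypothesis enters. The two constructions rest on the same idea, since the decreasing-sum rule is a reversed linear extension of the order on the remaining coordinates.
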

\begin{proof}
Order the vertices $b\in B$ in $\pi_1$ by increasing $r_1(b)$, breaking ties by decreasing $\sum_{j=2}^k r_j(b)$, and then by any fixed order. For $i>1$, order them by increasing $r_i(b)$, breaking ties in the reverse of $\pi_1$.

Suppose $b$ preceded $c$ in every $\pi_i$. For $i>1$ the ranks cannot tie, because a tie would reverse their $\pi_1$ order. Hence $r_i(b)<r_i(c)$ for all $i>1$. If $r_1(b)<r_1(c)$, the vectors are strictly comparable, contrary to the hypothesis. If their first coordinates tie, the sum of the remaining coordinates is smaller for $b$, so the descending-sum rule places $c$ before $b$ in $\pi_1$, again a contradiction. Interchanging $b,c$ excludes the other constant relative order. Conversely, if $r_i(b)<r_i(c)$ for every $i$, every coordinate-respecting permutation places $b$ before $c$. Thus the absence of such comparisons is also necessary.
\end{proof}

\begin{lemma}[A construction from pairs]\label{lem:blocks}
Let $|A|=2k$ with $k\ge3$. If the stars in \eqref{eq:rank} can be completed so that no two rank vectors are strictly coordinatewise comparable, then $G$ is $k$-representable.
\end{lemma}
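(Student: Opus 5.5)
The plan is to build the word as a concatenation of $k$ local blocks, one for each pair $(x_i,y_i)$ of the matching, and to apply Lemma~\ref{lem:orders} to the completed rank vectors. Fix a completion $r(b)$ with no strictly comparable pair, and take the permutations $\pi_1,\ldots,\pi_k$ of $B$ given by Lemma~\ref{lem:orders}, each nondecreasing in its own coordinate. Block $W_i$ is obtained from $x_iy_ix_iy_i$ by inserting, in the gap labelled $r_i(b)$, every $b\in B$, with letters sharing a gap listed in the order of $\pi_i$. Then $W_i|_B=\pi_i$, each letter of $B$ occurs once in $W_i$, and $x_i,y_i$ occur twice each. Next I must fix the multiplicities. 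In $W=W_1W_2\cdots W_k$ each $b$ occurs $k$ times, but $x_i,y_i$ occur only twice. So I pad each $x_i,y_i$ to $k$ occurrences by adding copies outside $W_i$. One option is to place $x_iy_i$ pairs, or a permutation-like prefix or suffix of $A$-letters, so that vertices of $A$ never alternate with one another and the pattern of each $x_i$ or $y_i$ relative to each $b$ is unchanged.

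The verification then splits by the type of letter pair.
\begin{enumerate}[label=(\alph*)]
\item \emph{Pairs in $B$.} Restricted to $B$, the word is $\pi_1\pi_2\cdots\pi_k$ together with the padding, which contains no $B$-letters. Each pair appears in both orders by Lemma~\ref{lem:orders}. With one occurrence per block, a pair $b,c$ alternates exactly when the same letter comes first in every block. So no two letters of $B$ alternate, as required since $B$ is independent.
\item \emph{Pairs $(a,b)$ with $a\in\{x_i,y_i\}$ and $b\in B$.} Outside $W_i$, the letter $b$ occurs once per block and $a$ does not occur. The restriction $w|_{\{a,b\}}$ is therefore $b^{\,i-1}$, then the table pattern for $W_i$, then $b^{\,k-i}$, with the padding copies of $a$ inserted. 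I would choose the padding so that adjacency gives the alternating pattern $(ab)^k$ or $(ba)^k$, and nonadjacency creates a repeated letter. This is where the simple concatenation breaks.
\item \emph{Pairs inside $A$.} These must never alternate. In $W_i$, the letters $x_i,y_i$ already alternate as $xyxy$, which is wrong for a nonedge. The padding has to break this, for example with an $x_ix_i$ or $y_iy_i$ somewhere.
\end{enumerate}

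Item (b) is the main obstacle, so I would settle the placement of the extra $A$-letters first. My first attempt is to interleave: the copies of $x_i,y_i$ belong not only to $W_i$ but to every block, one occurrence of each per block, placed at the extreme ends of the blocks $W_j$ with $j\ne i$. The intent is that in $W_j$ the letter $b$ sits between consecutive copies of $a$ in a way consistent with $b$'s behaviour in $W_i$. Concretely, I would realise the Mozhui--Krishna scheme~\cite[Section~2]{MK} in which block $i$ carries the doubled pair. I would recheck its pattern table under the cyclic-rotation remark in the conventions: a uniform two-letter word alternates linearly iff it alternates cyclically, which lets the outer gaps $0$ and $4$ behave identically up to rotation. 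Once $a$ has exactly one occurrence per block, plus one extra in $W_i$, a careful case check over the four rank types together with the star cases should give alternation exactly for adjacency. Item (c) then follows because the doubled block and the single occurrences elsewhere produce a repeated letter for every pair of $A$-vertices.

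The final step is counting that every vertex occurs exactly $k$ times. Since the hypothesis only concerns orders of $B$, the permutations $\pi_i$ from Lemma~\ref{lem:orders} handle everything in $B$ and the construction does the rest.
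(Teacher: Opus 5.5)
Your blocks $W_i$ and your argument for pairs inside $B$ are the same as the paper's, and you correctly see that plain concatenation fails. The gap is that the padding of the $A$-letters is never constructed. Once Lemma~\ref{lem:orders} is available, that padding is the real content of the lemma. The scheme you do sketch cannot work, and your \emph{careful case check} is deferred to exactly the point where the construction must be fixed.

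\begin{itemize}
\item \emph{The count is wrong.} You give $a\in\{x_i,y_i\}$ one copy in every block $W_j$ with $j\ne i$, on top of its two copies in $W_i$ (your \emph{one occurrence per block, plus one extra in $W_i$}). Then $a$ occurs $k+1$ times while each $b\in B$ occurs $k$ times, so the word is not $k$-uniform.
\item \emph{Alternation forces the placement.} For an edge $a\sim b$, the block $W_i$ restricts to $aba$, and the other $k-1$ blocks contribute one $b$ each. In a $k$-uniform word, alternation (equivalently, cyclic alternation) therefore requires exactly one $a$ strictly between each two consecutive of these $k-1$ letters $b$. That is $k-2$ extra copies, with no $a$ between $W_i$ and the $b$ of $W_{i-1}$ or of $W_{i+1}$.
\item \emph{Your placement creates repeats.} Putting the copies at the same extreme end of every other block places an $a$ next to an outer $a$ of $W_i$ for all but one value of $i$, producing $aa$.
\end{itemize}

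The missing ingredient is the intervening words of~\cite[Section~2]{MK}. Between $W_j$ and $W_{j+1}$ (indices modulo $k$), insert $D_j$, consisting of all $y_\ell$ with $\ell\notin\{j,j+1\}$ followed by the corresponding $x_\ell$.
\begin{itemize}
\item Exactly the two intervening words adjacent to $W_i$ omit $x_i$ and $y_i$. This gives $2+(k-2)=k$ occurrences and precisely the cyclic alternation described above. The rotation remark then converts cyclic alternation into linear alternation.
\item The order $y_\ell$ before $x_\ell$ is what settles your item (c). If the padding copies appeared as $x_iy_i$, the restriction to $\{x_i,y_i\}$ would be $(x_iy_i)^k$, which alternates.
\item An intervening word containing pair $i$ exists only when $k\ge3$; this is where that hypothesis enters, and your proposal never uses it. For $k=2$ the intervening words are empty, which is why Lemma~\ref{lem:baseline} has to append a permutation.
\item Letters from different pairs fail to alternate because the two copies of $x_i$ (or $y_i$) in $W_i$ have no letter of another pair between them.
\item Nonadjacent $a,b$ give $aa$ inside $W_i$, whatever the padding.
\end{itemize}
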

\begin{proof}
Take the permutations from Lemma~\ref{lem:orders}. We assemble the local words using the intervening words of~\cite[Section~2]{MK}, and verify all pairs of letters. For $0\le j\le4$, let $B_{i,j}$ be the consecutive subword of $\pi_i$ consisting of the vertices $b$ with $r_i(b)=j$, possibly empty. Form
\begin{equation}\label{eq:block}
 W_i=B_{i,0}\,x_i\,B_{i,1}\,y_i\,B_{i,2}\,x_i\,B_{i,3}\,y_i\,B_{i,4}.
\end{equation}
In the definition of $D_i$, the successor $i+1$ is interpreted modulo $k$, so the successor of $k$ is $1$. Let $D_i$ consist of all $y_j$ with $j\in\{1,\ldots,k\}\setminus\{i,i+1\}$, followed by all corresponding $x_j$, each group in the usual increasing order of the indices $1,\ldots,k$. Define
\begin{equation}\label{eq:word}
                     w=W_1D_1W_2D_2\cdots W_kD_k.
\end{equation}
Each $b\in B$ occurs once in each $W_i$, hence $k$ times. Each member of pair $i$ occurs twice in $W_i$ and once in each of the $k-2$ intervening words that contain its pair, hence also $k$ times.

Vertices from different pairs of $A$ do not alternate: in the block of either one, its two occurrences have no letter from the other pair between them. Members $x_i,y_i$ of the same pair have order $x_i y_i x_i y_i$ in their block and order $y_i x_i$ in an intervening word. Such an intervening word exists because $k\ge3$, and the change of relative order prevents alternation.

Fix $a\in\{x_i,y_i\}$ and $b\in B$. If they are adjacent, \eqref{eq:block} places $b$ between the two copies of $a$ in $W_i$. On the complementary cyclic arc, the other $k-1$ blocks contain one $b$ each, separated by exactly the $k-2$ intervening words containing $a$; the two intervening words next to $W_i$ omit $a$. Thus their entire restriction alternates. If they are nonadjacent, both copies of $a$ lie on the same side of $b$ in their own block, giving consecutive $a$'s in the restricted word.

Finally, the restriction to $B$ is $\pi_1\cdots\pi_k$. Two $B$ letters alternate exactly when they have the same relative order in every permutation. Lemma~\ref{lem:orders} excludes that possibility. Therefore \eqref{eq:word} represents precisely $G$.
\end{proof}

Vertices with identical neighbourhoods may be given identical completed rank vectors, which are not strictly comparable. The tie-breaking rules of Lemma~\ref{lem:orders} ensure that the corresponding letters occur in both relative orders. This observation justifies removing duplicate neighbourhoods in the finite cases and restoring them afterwards.

\section{Completing the unspecified entries}\label{sec:obstructions}
Call an oriented matching \emph{feasible} if its initial vectors $\rho(b)$ admit completions $r(b)$ without a strictly coordinatewise comparable pair. We now give an exact test for this property. The two extreme values make most rows harmless: a completed row containing both $0$ and $4$ can be neither strictly below nor strictly above another row.

Consequently, a row with two or more stars can be made incomparable with every other row by replacing one star by $0$ and another by $4$. Such a row can neither be the smaller nor the larger member of a strict comparison, whatever the other entries are. Discard these rows from the feasibility test.

We index each initial row $\rho(b)$ by its vertex $b\in B$. The remaining rows are either fixed (no stars) or have one star. Associate a Boolean variable $t_b$ to each one-star row: $t_b=0$ assigns rank $0$, and $t_b=1$ assigns rank $4$. For distinct $b,c\in B$, define $b\prec_*c$ to mean that $\rho_i(b)<\rho_i(c)$ at every coordinate where neither row has a star. Then all possible strict comparisons are excluded by the following exact constraints:
\begin{center}
\begin{tabular}{lll}
\toprule
Type of $b,c$ & Condition & Required consequence\\
\midrule
fixed, fixed & $b\prec_*c$ & infeasible\\
one-star, fixed & $b\prec_*c$ & $t_b=1$\\
fixed, one-star & $b\prec_*c$ & $t_c=0$\\
one-star, one-star & $b\prec_*c$ & $t_c=1\Rightarrow t_b=1$\\
\bottomrule
\end{tabular}
\end{center}
For example, a one-star row can be strictly smaller than a fixed row only when its star is assigned $0$; a one-star row can be strictly larger only when its star is assigned $4$.

These constraints give a complete algorithm. To see the last line of the table, a strict comparison between two one-star rows can occur exactly when the smaller row is assigned $0$ and the larger row $4$, provided their nonstarred coordinates have the indicated inequalities. This also holds when their stars occupy the same coordinate. Start with all variables forced to $1$ and close this set under the implications. Feasibility fails exactly when there is a strictly comparable pair of fixed rows or the closure reaches a variable forced to $0$. Otherwise assign $1$ on the closure and $0$ elsewhere. For three pairs, we use these implication paths to describe the finite obstructions in Section~\ref{sec:three-certificate}.

The implication system allows chains of forced choices. For four or more coordinates, every contradictory chain contains a direct conflict: otherwise four increasing positive ranks would have to fit into the three-element set $\{1,2,3\}$. This gives the following classification.

\begin{lemma}[Two-row and three-row obstructions]\label{lem:cores}
For $k\ge4$, feasibility fails if and only if either a fixed pair is strictly comparable, or a one-star row $c$ has its Boolean variable $t_c$ directly forced both to $0$ and to $1$.

In the latter case, if its star is at coordinate $j$, there are fixed rows $L,U$ such that, for every $i\ne j$,
\begin{equation}\label{eq:triple}
                       (\rho_i(L),\rho_i(c),\rho_i(U))=(1,2,3).
\end{equation}
In particular $c$ is adjacent to every vertex of $A$ except the two in pair $j$.
\end{lemma}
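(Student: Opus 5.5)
The plan is to read both directions off the implication system described just before the lemma, and then show that for $k\ge4$ every contradictory implication chain has length one.

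\emph{Step 1: the converse direction.} This is immediate. A strictly comparable fixed pair cannot be repaired. A variable $t_c$ that is directly forced both to $0$ and to $1$ means that every completion of $c$ is strictly comparable to $L$ or to $U$. Since rows with two or more stars were discarded harmlessly, the table of constraints is exact, so in either case feasibility fails.

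\emph{Step 2: the shape of a failure.} Suppose feasibility fails and no fixed pair is strictly comparable. By the closure algorithm, some variable that is forced to $0$ lies in the closure of the variables forced to $1$. Unwinding the table gives a chain of one-star rows $c_m,\dots,c_1$ together with fixed rows $L,U$ such that
\[
L\prec_* c_m\prec_* c_{m-1}\prec_*\cdots\prec_* c_1\prec_* U .
\]
Here $c_1\prec_*U$ forces $t_{c_1}=1$. Each relation $c_{l+1}\prec_*c_l$ propagates $t_{c_l}=1\Rightarrow t_{c_{l+1}}=1$. Finally $L\prec_*c_m$ forces $t_{c_m}=0$. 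Choose such a chain with $m$ minimal. The goal is to show $m=1$, which is exactly the statement that one variable is directly forced both ways.

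\emph{Step 3: ruling out $m\ge2$.} This is the key step and the main obstacle. The essential observation is that every nonstarred entry, of a fixed row or of a one-star row, lies in $\{1,2,3\}$. Suppose $m\ge2$ and let $j_m,j_{m-1}$ be the star coordinates of $c_m,c_{m-1}$.
\begin{itemize}
\item At each of the $k-2\ge2$ coordinates $i\notin\{j_m,j_{m-1}\}$, the relations $L\prec_*c_m\prec_*c_{m-1}$ both apply. This gives $1\le\rho_i(L)<\rho_i(c_m)<\rho_i(c_{m-1})$, so $\rho_i(c_{m-1})=3$.
\item Let $d$ be the successor of $c_{m-1}$ in the chain, namely $c_{m-2}$ or $U$. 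Then $d$ has at most one star. Hence at least one of these $k-2\ge2$ coordinates is unstarred in both $c_{m-1}$ and $d$.
\item At that coordinate, $c_{m-1}\prec_*d$ would require $\rho_i(d)>3$, which is impossible.
\end{itemize}
This is the precise form of the remark that four increasing positive ranks cannot fit into $\{1,2,3\}$, and it is where $k\ge4$ is used. Hence $m=1$.

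\emph{Step 4: the triple.} With $m=1$ we have fixed rows $L,U$ and a one-star row $c$ with star at $j$, satisfying $L\prec_*c\prec_*U$. For every $i\ne j$ this gives
\[
1\le\rho_i(L)<\rho_i(c)<\rho_i(U)\le3,
\]
so the triple of ranks is exactly $(1,2,3)$, as in \eqref{eq:triple}. Rank $2$ means $c$ is adjacent to both $x_i$ and $y_i$. The star at $j$ means $c$ is adjacent to neither vertex of pair $j$. This gives the final sentence of the lemma.
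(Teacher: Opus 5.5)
Your proposal is correct and follows essentially the paper's argument: take a contradictory implication chain from $L$ to $U$, find a coordinate avoiding the stars of its first rows, and observe that four strictly increasing values there cannot fit in $\{1,2,3\}$. The only difference is cosmetic: you treat the paper's cases $t=2$ and $t\ge3$ together by examining $L$, the next two chain rows and their successor $d$, which are exactly the first four rows of the chain that the paper uses.
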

\begin{proof}
If a forced-$1$ variable reaches a forced-$0$ variable through implications, take a shortest such path. With the fixed rows that force its endpoints, and reversing the path's order, it gives a sequence
\[
                  L,c_1,\ldots,c_t,U
\]
with $L\prec_*c_1\prec_*\cdots\prec_*c_t\prec_*U$. Here $L$ forces $t_{c_1}=0$, $U$ forces $t_{c_t}=1$, and each consecutive comparison corresponds to an implication in the reverse direction. The $c_i$ have one star each; the fixed rows have none.

If $t=2$, some coordinate avoids both stars, and at that coordinate the four rows would be strictly increasing in $\{1,2,3\}$, impossible. If $t\ge3$, the first four rows $L,c_1,c_2,c_3$ have at most three starred coordinates between them. Since $k\ge4$, a coordinate avoids all those stars, again requiring four strictly increasing values in a three-element set. Thus $t=1$: a single row is directly forced both ways.

At each coordinate other than that row's star, the three values must be strictly increasing in $\{1,2,3\}$, so they are exactly $1,2,3$. This proves \eqref{eq:triple}. The converse is the direct forcing condition already described.
\end{proof}

\begin{example}\label{ex:obstruction}
Consider three vertices $L,c,U\in B$ with initial rows
\[
 \rho(L)=(2,1,1,1),\qquad \rho(c)=(\st,2,2,2),\qquad \rho(U)=(1,3,3,3).
\]
Assigning $0$ to the star gives $r_i(c)<r_i(U)$ for every $i$; assigning $4$ gives $r_i(L)<r_i(c)$ for every $i$. Thus no completion works, although $L$ and $U$ are not strictly comparable. This is why pair obstructions alone are insufficient. The obstruction is entirely local: one row is forced in opposite directions by two fixed rows.
\end{example}

With three coordinates, the stars in the first three intermediate rows may occupy all coordinates, so the proof of Lemma~\ref{lem:cores} no longer excludes a longer implication path. The three-pair certificate therefore accounts for the full implication closure.

For odd part sizes, however, the construction gives an additional restriction. Adjoin an isolated vertex $z$ to the odd part and pair it with a vertex $a$, in the order $(a,z)$. At this coordinate, every initial row has entry $1$ if its vertex is adjacent to $a$, and $\st$ otherwise. The following lemma shows that this restriction excludes longer contradictory paths even when $k=3$. In Section~\ref{sec:all}, a suitable choice of $a$ will also exclude the remaining three-row obstruction.

\begin{lemma}[A distinguished coordinate]\label{lem:special-coordinate}
Suppose $k\ge3$ and, at some coordinate $j$, every initial row has entry $1$ or $\st$. Then feasibility fails if and only if there are fixed rows $L,U$ and a one-star row $c$ such that
\[
 (\rho_j(L),\rho_j(c),\rho_j(U))=(1,\st,1),\qquad
 (\rho_i(L),\rho_i(c),\rho_i(U))=(1,2,3)\quad(i\ne j).
\]
\end{lemma}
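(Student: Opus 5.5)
We use the exact feasibility test from the start of Section~\ref{sec:obstructions}. After discarding rows with two or more stars, feasibility fails precisely in one of two situations. Either some fixed pair satisfies $b\prec_*c$, or the closure of the forced-$1$ variables under the implications reaches a variable that is forced to $0$. We rule out the first situation and then shorten any contradictory path to length one. The proof of Lemma~\ref{lem:cores} used four coordinates to find a coordinate free of stars; here coordinate $j$ replaces that counting argument.

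First consider fixed rows. A fixed row has entry $1$ at coordinate $j$, because its only allowed values there are $1$ and $\st$. So two fixed rows tie at $j$, and neither is $\prec_*$ the other. Hence any infeasibility comes from an implication path. As in the proof of Lemma~\ref{lem:cores}, take a shortest path and obtain a sequence $L\prec_* c_1\prec_*\cdots\prec_* c_t\prec_* U$. Here $L,U$ are fixed, the $c_i$ are one-star rows, $L$ forces $t_{c_1}=0$, and $U$ forces $t_{c_t}=1$.

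Since $\rho_j(L)=\rho_j(U)=1$, the star of every one-star row that is comparable with a fixed row must lie at coordinate $j$. Otherwise that row would also have entry $1$ at $j$, and the strict inequality at $j$ would fail. This holds for $c_1$ and $c_t$. Next, two consecutive one-star rows $c_i\prec_* c_{i+1}$ cannot have stars at two different coordinates. If they did, both would have a non-star entry at $j$, and that entry is necessarily $1$, which blocks the strict inequality at $j$. It follows by induction that every $c_i$ has its star at $j$. Now suppose $t\ge2$. Pick any coordinate $i\neq j$; there is one, since $k\ge3$. At $i$, the rows $L,c_1,c_2,U$ are all non-star and strictly increasing, so they give four strictly increasing values in $\{1,2,3\}$, which is impossible. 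Therefore $t=1$. At each $i\ne j$, the three values are then strictly increasing in $\{1,2,3\}$, so they are exactly $(1,2,3)$. At coordinate $j$ we have $\rho_j(L)=\rho_j(U)=1$ because these rows are fixed, and $c$ has its star there.

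For the converse, a triple of this form forces $t_c=0$ through $L\prec_* c$ and $t_c=1$ through $c\prec_* U$, so feasibility fails. The step needing most care is the star-propagation argument. We must check that the definition of $\prec_*$ ignores only those coordinates where one of the two rows has a star. We must also check that the discarded rows with two or more stars do not matter. They do not, because those rows are made incomparable before the implication system is set up.
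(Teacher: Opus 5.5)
Your treatment of the fixed pairs, of the endpoints $c_1,c_t$, of the case $t=2$, and of the converse is correct. The star-propagation step, however, is false as stated. You claim that if $c_i\prec_*c_{i+1}$ then their stars cannot lie at different coordinates, because both would then have a non-star entry at $j$. That follows only when neither star is at $j$. If $c_i$ is starred at $j$ and $c_{i+1}$ at some $q\ne j$, then $\prec_*$ ignores coordinate $j$ and nothing is contradicted. For instance, with $j=1$ we have $(\st,2,1)\prec_*(1,\st,2)$. The valid conclusion is weaker: of two consecutive one-star rows, at least one is starred at $j$. So your induction that every $c_i$ is starred at $j$ fails, and with it your treatment of $t\ge3$. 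For $t\ge3$ your quadruple $L,c_1,c_2,U$ is also not consecutive in the chain, so it gives four increasing values only if every intermediate row is unstarred at $i$.

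One symptom of the gap is that you use $k\ge3$ only to find a coordinate $i\ne j$, which needs just $k\ge2$. Yet the lemma is false for $k=2$. Take $j=1$ and the rows $L=(1,1)$, $c_1=(\st,2)$, $c_2=(1,\st)$, $c_3=(\st,1)$, $U=(1,2)$. They satisfy $L\prec_*c_1\prec_*c_2\prec_*c_3\prec_*U$ and admit no feasible completion, since the forced choices cascade from $c_1$ to $c_3$ and end with $c_3$ strictly below $U$. No fixed row has a $3$, so no triple of the stated form exists. The repair, which is the paper's argument, uses the first four rows of the chain when $t\ge3$. The star of $c_1$ is at $j$. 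At least one of $c_2,c_3$ is starred at $j$, since otherwise both have entry $1$ there and $c_2\prec_*c_3$ fails. Hence the stars of $c_1,c_2,c_3$ occupy at most two coordinates. Since $k\ge3$, some coordinate is star-free for $L,c_1,c_2,c_3$, and these would give four strictly increasing values in $\{1,2,3\}$. This is exactly where $k\ge3$ is needed.
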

\begin{proof}
Fixed rows agree at coordinate $j$, so none are strictly comparable. By the implication criterion, any failure therefore gives a sequence
\[
                 L\prec_*c_1\prec_*\cdots\prec_*c_t\prec_*U,
\]
where $L,U$ are fixed and each $c_i$ has one star. The first comparison forces the star of $c_1$ to occur at $j$, since otherwise both entries there would be $1$.

If $t=2$, a coordinate outside the two star positions gives four strictly increasing values in $\{1,2,3\}$, which is impossible. If $t\ge3$, at least one of $c_2,c_3$ is starred at $j$: otherwise their entries there are equal. Thus the stars of $c_1,c_2,c_3$ occupy at most two coordinates. Since $k\ge3$, a coordinate outside them gives four strictly increasing values for $L,c_1,c_2,c_3$, again impossible.

Hence $t=1$. Its star is at $j$, and the three entries at every other coordinate are necessarily $1,2,3$. Conversely, the displayed pattern forces the star of $c$ to be both $0$ and $4$.
\end{proof}

\section{Balanced parts: probability estimates}\label{sec:prob}
We now assume $|A|=|B|=2k$. By Lemma~\ref{lem:blocks}, it is enough to find an oriented matching whose rank table has a feasible completion. We first keep the matching fixed and randomise its orientations. When this estimate is insufficient, we randomise the matching as well. Only the final case $k=5$ needs the full freedom in the star assignments.

\subsection{Orienting a fixed matching: \texorpdfstring{$k\ge8$}{k at least 8}}
Fix any unoriented perfect matching of $A$ and independently orient its $k$ pairs. Assign every one-star row rank $0$, and assign every multi-star row both extremes as above.

For a given unordered pair of $B$ rows, the probability of strict comparability is at most $2^{1-k}$. Multi-star rows can never participate. Two one-star rows both contain a $0$, so neither can be strictly larger. For two fixed rows, each coordinate must have unequal ranks; reversing a pair reverses the inequality at that coordinate, so at most two of the $2^k$ orientations give a consistent strict direction. For a one-star row and a fixed row, the former must be the smaller; each of the other $k-1$ coordinates fixes an orientation, while the starred coordinate is unrestricted. Again there are at most two choices.

Consequently the probability of any strict comparison is at most
\begin{equation}\label{eq:orientation}
              \binom{2k}{2}2^{1-k}.
\end{equation}
At $k=8$ this is $120/128=15/16<1$. Its ratio at consecutive values is
\[
 \frac{(k+1)(2k+1)}{2k(2k-1)}<1\qquad(k\ge8),
\]
so it stays below one. Some orientation has no strict comparison. Lemma~\ref{lem:blocks} supplies the word.

\subsection{Averaging over matchings: \texorpdfstring{$k=6,7$}{k = 6, 7}}\label{sec:exactcount}
Choose uniformly among all oriented perfect matchings of $A$. The endpoints within each pair are ordered, but the pairs themselves are unordered. For each prescribed $k$-element set of first endpoints, a matching is a bijection from that set to its complement, giving $k!$ choices. Thus their total number is
\[
                            Q_k=\binom{2k}{k}k!=\frac{(2k)!}{k!}.
\]
Keep the same simple assignment rule: one-star rows receive $0$, multi-star rows receive both extremes.

For two rows $b,c$, partition $A$ into $X,Y,Z,T$, according as a vertex is adjacent to $b$ only, to $c$ only, to both, or to neither. Let $x=|X|$, $y=|Y|$, $z=|Z|$ and $t=|T|$, so $x+y+z+t=2k$. For integer arguments, define
\begin{equation}\label{eq:matchingcount}
 M_k(x,y,z)=
 \begin{cases}
 \displaystyle\frac{x!y!z!}{(k-x)!(k-y)!(k-z)!},
   &x+y+z=2k,\quad 0\le x,y,z\le k,\\[6pt]
 0,&\text{otherwise}.
 \end{cases}
\end{equation}
This counts matchings using only pairs between the three indicated classes, with each pair's orientation prescribed by its classes. Indeed, the numbers of $XY,XZ,YZ$ pairs must be
\[
                 \alpha=k-z,\qquad\beta=k-y,\qquad\gamma=k-x.
\]
Choose which vertices in each class go to each of the other two classes, and then choose the three bijections across those classes. The number of choices is
\[
 \frac{x!}{\alpha!\beta!}\,
 \frac{y!}{\alpha!\gamma!}\,
 \frac{z!}{\beta!\gamma!}\,
 \alpha!\beta!\gamma!
       =\frac{x!y!z!}{\alpha!\beta!\gamma!}.
\]
The stated conditions are exactly those ensuring that the three pair counts are nonnegative integers.

To have $r_i(b)<r_i(c)$ for every $i$, the possible directed pair types are $XY,XZ,ZY$, plus at most one of $YY,TY,YT$. The last three make the smaller row's sole zero. A $YY$ pair has two orientations of its distinct vertices. No other pair type works. Thus $t\ge2$ gives probability zero. Summing both possible directions of comparison, the numerator of the probability of strict comparability is
\begin{equation}\label{eq:Pzero}
 P_k(x,y,z,0)=2M_k(x,y,z)
 +y(y-1)M_{k-1}(x,y-2,z)
 +x(x-1)M_{k-1}(x-2,y,z),
\end{equation}
and
\begin{equation}\label{eq:Pone}
 P_k(x,y,z,1)=2yM_{k-1}(x,y-1,z)
             +2xM_{k-1}(x-1,y,z).
\end{equation}
Here a directed type such as $XY$ means that the first endpoint lies in $X$ and the second in $Y$. When $t=0$, choosing the exceptional $YY$ pair gives $y(y-1)$ oriented choices; the symmetric comparison gives the term involving $x(x-1)$. When $t=1$, its vertex must be paired with a vertex of $Y$ or $X$, and either orientation is possible. These account for the factors $2y$ and $2x$. Set $P_k(x,y,z,t)=0$ for $t\ge2$. Thus the probability of a strict comparison, in either direction, is exactly $P_k(x,y,z,t)/Q_k$.

The following table gives the maxima of the integer numerators over $x,y$ with $x+y=2k-t-z$, separately for each $z$. All omitted $z>k$ give zero. These are finite evaluations of factorials in \eqref{eq:matchingcount}--\eqref{eq:Pone}, checked entry by entry by the verification command in Appendix~\ref{app:verification}.
\begin{center}
\small
\begin{tabular}{ccrrrrrrrr}
\toprule
$k$&$t$&$z=0$&1&2&3&4&5&6&7\\
\midrule
6&0&5040&5040&7200&8640&8064&5040&1440&--\\
6&1&1440&2400&2880&3456&2880&1440&0&--\\
7&0&40320&40320&60480&79200&86400&72000&40320&10080\\
7&1&10080&17280&21600&28800&28800&23040&10080&0\\
\bottomrule
\end{tabular}
\end{center}
Therefore
\[
 \max\Pr(\text{strict comparison})=
 \begin{cases}
 8640/665280=1/77,&k=6,\\
 86400/17297280=5/1001,&k=7.
 \end{cases}
\]
The union bounds are respectively
\begin{equation}\label{eq:unions67}
                  \binom{12}{2}\frac1{77}=\frac67<1,
 \qquad \binom{14}{2}\frac5{1001}=\frac5{11}<1.
\end{equation}
Both cases follow.

\subsection{Five pairs}
Here it is better to allow the optimal star assignment and use Lemma~\ref{lem:cores}. The probability that two given rows form a strictly comparable fixed pair is $2M_5(x,y,z)/Q_5$. The maximum of $M_5$ is $216$, attained at a permutation of $(3,3,4)$, and $Q_5=30240$. Thus the probability of a fixed-pair obstruction is at most $1/70$.

For $k=5$, the pair counts $(\alpha,\beta,\gamma)=(5-z,5-y,5-x)$ satisfy $\alpha+\beta+\gamma=5$. Up to permutation, the possibilities and the corresponding values of $M_5(x,y,z)$ are
\begin{center}
\begin{tabular}{crrrrr}
\toprule
$(\alpha,\beta,\gamma)$ &$(0,0,5)$&$(0,1,4)$&$(0,2,3)$&$(1,1,3)$&$(1,2,2)$\\
$M_5(x,y,z)$&120&120&120&192&216\\
\bottomrule
\end{tabular}
\end{center}

The one-star row in a three-row obstruction of Lemma~\ref{lem:cores} has degree $2k-2$. Let $d$ be the number of $B$ vertices with that degree. The fixed rows have degree $k$ or $k+1$, since they have one neighbour in each of the other $k-1$ pairs and one or two in the special pair. For $k=5$ these degrees are different from $2k-2$. There are therefore at most
\begin{equation}\label{eq:triples5}
                         d\binom{10-d}{2}\le63
\end{equation}
choices of a central row and an unordered pair of fixed rows.

For each choice, the central row's two non-neighbours must be matched together, with probability $1/(2k-1)$. Conditional on this, outside that pair the fixed rows' neighbourhoods must be complementary $(k-1)$-sets. Every prescribed set of $k-1$ first endpoints on the remaining vertices admits exactly $(k-1)!$ matchings. Their first-endpoint set is therefore uniform among the $\binom{2k-2}{k-1}$ possibilities. It must equal one fixed row's neighbourhood or the other's. Hence the probability of this triple obstruction is at most
\begin{equation}\label{eq:tripleprob}
 \theta_k=\frac{2}{(2k-1)\binom{2k-2}{k-1}},\qquad \theta_5=\frac1{315}.
\end{equation}
This is an upper bound: it ignores further necessary conditions inside the central row's pair.

The probability that any obstruction occurs is consequently at most
\begin{equation}\label{eq:union5}
                  \binom{10}{2}\frac1{70}+63\frac1{315}
                  =\frac{59}{70}<1.
\end{equation}
Some matching is feasible, so Lemma~\ref{lem:blocks} proves the balanced ten-by-ten case.

\section{The finite cases}\label{sec:finite}
The probability estimates leave balanced parts of sizes six and eight. We prove the two remaining assertions by certificates of Boolean unsatisfiability. The common idea is to describe a family that obstructs every oriented matching, then show that such a family cannot exist.

\begin{lemma}\label{lem:three}
Every family of at most six distinct subsets of a six-element set has a feasible oriented matching.
\end{lemma}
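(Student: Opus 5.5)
We reduce the statement to one Boolean formula and prove that formula unsatisfiable, as announced in Section~\ref{sec:finite}. Fix the ground set $A=\{1,\dots,6\}$. There are $Q_3=6!/3!=120$ oriented perfect matchings of $A$, and each one determines an initial row $\rho(S)\in\{\st,1,2,3\}^3$ for every subset $S\subseteq A$ via \eqref{eq:rank}.

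By the remark after Lemma~\ref{lem:blocks}, we may assume the family $\mathcal F$ consists of distinct sets. By monotonicity we may assume $|\mathcal F|=6$, or argue for all sizes at once. For a fixed oriented matching $\mu$, feasibility depends only on the rows of $\mathcal F$. Rows with two or more stars are discarded. For $k=3$ we cannot use Lemma~\ref{lem:cores}, so the test is the full implication closure of Section~\ref{sec:obstructions}. Under $\mu$, the family is infeasible exactly when one of the following holds:
\begin{enumerate}
\item two fixed rows are strictly comparable; or
\item there is a chain $L\prec_*c_1\prec_*\cdots\prec_*c_t\prec_*U$, with $L,U$ fixed and each $c_i$ a one-star row.
\end{enumerate}
In case (ii) we take the chain shortest, so the $c_i$ are distinct and $t\le4$. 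Each such witness involves at most six sets. Hence the set of all minimal obstruction patterns for $\mu$ is a finite list $\mathcal O_\mu$ of subfamilies of $2^A$, each of size at most six. We can enumerate $\mathcal O_\mu$ directly, because the comparison $\prec_*$ between two sets under $\mu$ is computable.

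The encoding introduces one Boolean variable $v_S$ for each of the $64$ subsets $S$, meaning $S\in\mathcal F$. It then imposes two groups of constraints.
\begin{enumerate}
\item A cardinality constraint $\sum_S v_S\le 6$, written as a standard sequential-counter CNF.
\item For each of the $120$ matchings $\mu$, the disjunction over $O\in\mathcal O_\mu$ of $\bigwedge_{S\in O}v_S$, Tseitin-encoded with one auxiliary variable per pattern.
\end{enumerate}
The family $\mathcal F$ is a counterexample exactly when this formula is satisfiable. We prove unsatisfiability with a solver and check the resulting DRAT/LRAT proof with an independent checker, which is the certificate the paper promises.

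Before calling the solver we shrink the formula using symmetry. Complementation maps $\rho$ entries $1\leftrightarrow3$ and $\st\leftrightarrow2$ coordinatewise. This does not obviously preserve feasibility, so we will not rely on it. We also note pruning that can be done by hand:
\begin{enumerate}
\item The empty set, and any set with a star in two coordinates, is harmless under every $\mu$ and can be deleted from the variable pool for that $\mu$.
\item By the remark on duplicates, sets with equal rows under all $\mu$ coincide, so distinctness already handles repetition.
\end{enumerate}
Symmetry breaking by $S_6$ acting on $A$, for instance fixing a lexicographically least family, is optional.

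The main obstacle is producing $\mathcal O_\mu$ correctly and compactly. The chains of length up to six make the pattern lists large. To keep the formula small, we will instead encode the implication closure itself. For each $\mu$ we introduce reachability variables $f_{\mu,S}$, meaning that $t_S$ is forced to $1$, and add the following clauses:
\begin{enumerate}
\item $v_S\wedge v_U\to f_{\mu,S}$ for each fixed $U$ with $S\prec_*U$;
\item $f_{\mu,T}\wedge v_S\to f_{\mu,S}$ whenever $S\prec_*T$;
\item $v_L\wedge v_S\wedge f_{\mu,S}\to\mathrm{bad}_\mu$ for each fixed $L$ with $L\prec_*S$.
\end{enumerate}
We then require $\bigwedge_\mu \mathrm{bad}_\mu$, or a fixed-pair conflict. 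This is sound, since least-fixed-point semantics are captured because we only need infeasibility to be derivable, and it is linear in size. The remaining risk is solver time, which we expect to be modest at $64$ primary variables.
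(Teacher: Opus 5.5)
Your overall route is the paper's: reduce to a SAT formula over the $64$ masks and check a refutation certificate. Your first encoding is the paper's formula $F_3$. It uses explicit lists $\mathcal O_\mu$, one auxiliary variable per obstruction implying membership of its sets, and a covering clause for each matching. The paper builds these lists as inclusion-minimal obstruction sets from the support closure. They are small, with $759$ per matching. The paper also uses simultaneous reversal to reduce the $120$ matchings to $60$, adds lexicographic symmetry breaking, and checks an RUP refutation. So the size concern that makes you abandon this encoding is unfounded.

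The gap is in the reachability encoding you adopt instead, and it cannot produce a proof. All three of your clause families are implications from premises to $f_{\mu,S}$ or $\mathrm{bad}_\mu$, so they only bound these variables from below. Set every $f_{\mu,S}$ and every $\mathrm{bad}_\mu$ to true and select any six sets. This satisfies every clause, including $\bigwedge_\mu\mathrm{bad}_\mu$ and the cardinality bound. The formula is therefore satisfiable and has no refutation. A CNF model is an arbitrary fixed point of your rules, not the least one. Forward rules capture the least fixed point only when you assert that the closure does \emph{not} reach a conflict. That encodes feasibility, the opposite polarity from what the lemma needs.

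To assert infeasibility you need the converse implications: $\mathrm{bad}_\mu\Rightarrow\bigvee(v_L\wedge v_S\wedge f_{\mu,S})$, and $f_{\mu,S}\Rightarrow$ some justifying rule instance. Even these are insufficient for $k=3$, because one-star rows can form directed $\prec_*$-cycles, for example
\[
(\st,2,2)\prec_*(1,\st,3)\prec_*(2,1,\st)\prec_*(\st,2,2).
\]
The three corresponding $f$ variables can then justify one another circularly, with no fixed row forcing any of them. You would need a well-foundedness device, such as reachability variables indexed by derivation depth. Depth at most four suffices, since a shortest chain has $t\le4$. Alternatively, return to the explicit minimal-obstruction lists, as the paper does.
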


\begin{lemma}\label{lem:four}
Every family of at most eight distinct subsets of an eight-element set has a feasible oriented matching.
\end{lemma}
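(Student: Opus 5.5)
The statement to prove is Lemma~\ref{lem:four}, the case $k=4$. Since $k\ge4$, Lemma~\ref{lem:cores} gives an exact description of infeasibility. An oriented matching of the eight-element ground set fails if and only if one of two things happens: two fixed rows are strictly comparable, or some one-star row $c$ is flanked by fixed rows $L,U$ with pattern $(1,2,3)$ at every coordinate other than its star. The plan is a proof by contradiction, organised as a Boolean satisfiability problem. We suppose a family $\mathcal F$ of at most eight distinct subsets of an eight-element set $A$ obstructs all $Q_4=8!/4!=1680$ oriented matchings, and then show that no such family exists.

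\textbf{Preliminary reductions.} First, duplicates may be removed; the remark after Lemma~\ref{lem:blocks} justifies this. Second, both obstruction types constrain the sets tightly. Two fixed rows $b,c$ strictly comparable in direction $b<c$ must split each pair into the directed types $XY,XZ,ZY$. So $N(b)$ and $N(c)$ together cover $A$, and the counts satisfy $x+y+z=8$ with $x,y,z\le4$. The three-row obstruction needs a central set of size $6$, namely $A$ minus one pair. It also needs two fixed sets of sizes $4$ or $5$ that are complementary outside that pair. For any fixed pair of subsets, the number of oriented matchings it can obstruct is bounded by $2M_4(x,y,z)$ or by the triple count. These bounds come from the explicit counts in Section~\ref{sec:exactcount}, and they give a first screen. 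A union bound alone will likely fall short: the sum over $\binom82=28$ pairs of the maximum of $2M_4$ probably exceeds $1680$. This is exactly why the paper resorts to a certificate.

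\textbf{Encoding.} Introduce $8\times 256$ Boolean variables $s_{b,S}$ saying that row $b$ is the set $S$. Constrain each row to exactly one set, and require the rows to be distinct, or allow an empty slot to cover families with fewer than eight members. Symmetry breaking under $S_8$ acting on $A$ shrinks the search considerably. For instance, we can fix a lexicographically least representative of the degree sequence, or order the rows. Then, for every oriented matching $\mu$, add a clause stating that $\mu$ is obstructed. By Lemma~\ref{lem:cores}, this is a disjunction over row pairs $\{b,c\}$ of ``$(S_b,S_c)$ strictly comparable under $\mu$'' and over triples of the local triple pattern. For each $\mu$, the set pairs that obstruct it are precomputed, so these are disjunctions of products of the $s$ variables; auxiliary variables for row-pair events keep the formula small. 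Modulo the symmetry of $A$, only one matching per conjugacy class needs to be handled first. Alternatively, we fix one matching and enumerate the obstructing structures for it as a seed, then propagate.

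\textbf{Main obstacle and verification.} The main obstacle is the size of the instance: $1680$ matching clauses over a large variable space. I would first try a more combinatorial preprocessing step. Each obstructing pair of sets kills at most $2M_4\le 2\cdot 4!\,\cdot\ldots$ matchings, and there are only $\binom82+8\binom72=196$ pair and triple slots. So any covering family must use many pairs near the maxima of $M_4$. This restricts the sets to sizes $3$--$5$ and to near-complementary configurations, and it could collapse the search to a small case analysis. Failing that, we hand the formula to a SAT solver and check the resulting DRAT unsatisfiability proof with an independent checker, as Appendix~\ref{app:verification} suggests. Unsatisfiability yields a feasible oriented matching, and Lemma~\ref{lem:blocks} then finishes the eight-by-eight balanced case.
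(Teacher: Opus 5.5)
Your plan follows the paper's proof. Lemma~\ref{lem:cores} reduces failure of a four-pair matching to the $81$ pair patterns and $24$ minimal triple patterns of Section~\ref{sec:catalogue}. The existence of an obstructing family is then encoded as a SAT instance, with a cover clause for each matching and symmetry breaking on the family, and the lemma is the checked refutation of that instance.

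The encodings differ only in implementation. The paper uses one selector per mask, a sequential counter for $\sum_v s_v\le 8$, and one variable per obstruction. This avoids the row-labelling symmetry that your $8\times256$ row variables create. The paper supplies the refutation as a RUP certificate of $34{,}482$ additions, checked by two implementations and in Lean. Your proposal becomes a proof only once such a refutation is actually produced and checked.

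One suggestion must be corrected: ``modulo the symmetry of $A$, only one matching per conjugacy class needs to be handled.''
\begin{itemize}
\item Relabelling $A$ moves the family as well as the matching. For a fixed family, being obstructed at $\mu$ says nothing about $\sigma\mu$.
\item All oriented perfect matchings of an eight-element set form a single $S_8$-orbit, so this remark cannot replace the cover clauses.
\end{itemize}
The $S_8$ symmetry can be spent only once.
\begin{itemize}
\item If you break it on the family side (your degree ordering, or the paper's lexicographic comparisons for adjacent transpositions and rotations), you need a cover clause for every matching. The only admissible reductions are those that preserve feasibility for a fixed family: reversing all pairs at once and reordering the pairs. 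These give the paper's $105\cdot2^3=840$ representatives.
\item If instead you fix a seed obstruction at one canonical matching, only the stabiliser of that seed may be broken on the family side.
\end{itemize}
Imposing both full reductions can exclude every relabelling of a genuine obstructing family. A refutation of the resulting formula would then prove nothing.

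Two smaller corrections to your screening estimates:
\begin{itemize}
\item The maximum of $M_4$ is $36$, attained at $(x,y,z)=(3,3,2)$, not something bounded by $4!$. So a pair obstructs at most $72$ matchings and a triple at most $1680\,\theta_4=24$. The pair term alone gives $28\cdot72=2016>1680$, which confirms that the union bound fails.
\item No set of size at most $3$ lies in any obstruction: fixed rows have at least $4$ elements and central rows have exactly $6$. The relevant sizes are therefore $4$ to $8$, not $3$ to $5$.
\end{itemize}
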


A subset of $\{0,\ldots,2k-1\}$ is encoded by the integer whose binary expansion has ones precisely at its elements. We call this integer its \emph{mask}. The neighbourhoods therefore range over $64$ masks when $k=3$, and $256$ when $k=4$.

Simultaneous reversal of all pairs replaces each prescribed rank $\rho_i$ by $4-\rho_i$ and interchanges the two extreme completions. It preserves feasibility. We may thus fix the orientation of the pair containing vertex $0$. There are $15\cdot2^{3-1}=60$ representatives for three pairs and $105\cdot2^{4-1}=840$ for four pairs. Reordering the pairs only permutes coordinates and also preserves feasibility.

\subsection{Obstructions for four pairs}\label{sec:catalogue}
For four pairs, Lemma~\ref{lem:cores} gives the complete list of minimal obstructions. A comparable fixed pair has, in each coordinate, one of the rank pairs
\[
                          (1,2),\ (1,3),\ (2,3).
\]
There are exactly $3^4=81$ such unordered pair obstructions. Each has a unique smaller row, so these ordered rank patterns count each obstruction once.

In a three-row obstruction, the central row has a star at one coordinate and rank $2$ at the other three. The two fixed rows $L,U$ have ranks $1$ and $3$, respectively, at those other coordinates. At the special coordinate, let $\lambda,\mu\in\{1,2,3\}$ be their respective ranks. If $\lambda<\mu$, the fixed rows already form a pair obstruction. Otherwise no proper subfamily is infeasible. Thus the six possibilities $\lambda\ge\mu$ give $4\cdot6=24$ minimal triple obstructions.

Each representative matching therefore has $105$ minimal obstructions. Translating the rank patterns back into subsets of the eight vertices and taking their union gives $4,935$ distinct obstructions: $2,415$ pairs and $2,520$ triples. The supplied catalogue is generated directly from these patterns.

\subsection{Implication paths for three pairs}\label{sec:three-certificate}
For three pairs, an obstruction may involve a longer implication path. We describe these paths by the sets of rows needed to force a Boolean value. This retains the small rank space $\{\st,1,2,3\}^3$ and avoids searching over all six-element families of neighbourhoods.

For each one-star row $b$, a \emph{support} is a set of rows containing $b$ from which the implications force $t_b=1$. We construct a list $\mathcal W_b$ of such supports by the following rules, retaining only sets of size at most six:
\begin{enumerate}[label=(\roman*)]
\item If $u$ is fixed and $b\prec_*u$, include $\{b,u\}$ in $\mathcal W_b$.
\item If $S\in\mathcal W_b$, $c$ has one star, and $c\prec_*b$, include $S\cup\{c\}$ in $\mathcal W_c$ whenever its size is at most six.
\end{enumerate}
Add a support only if it contains no support already retained for the same row, and then discard its strict supersets from that row's list. Continue until neither rule adds a support. Each addition enlarges the collection of sets containing a retained support, and no discarded support can become inclusion-minimal again. Since the rank space is finite, the procedure terminates.

Each retained support forces its row to $1$: the first rule is direct forcing, and the second follows the implication $t_b=1\Rightarrow t_c=1$. Conversely, induction along an implication path shows that every deduction within a selected family of at most six rows contains a retained support for its final row. Discarding a larger support does not affect this conclusion, since any subsequent step from the smaller support remains contained in the corresponding step from the larger one.

To obtain obstructions, include every comparable fixed pair, and every set $S\cup\{l\}$ of size at most six for which $S\in\mathcal W_b$, $l$ is fixed and $l\prec_*b$. The latter set forces $t_b$ to both values. Retain the inclusion-minimal sets in this list. A selected family of at most six rows is infeasible if and only if it contains one of them: the forward implication follows from the preceding induction and the implication criterion; the reverse follows from the conflicting deductions.

For the canonical three-coordinate rank table, this construction gives $2,517$ pairs $(b,S)$ and $759$ obstructions. Their sizes and numbers are
\begin{center}
\begin{tabular}{crrrrr}
\toprule
Size &2&3&4&5&6\\
Number &27&18&6&204&504\\
\bottomrule
\end{tabular}
\end{center}
Transporting these obstructions through the $60$ representative matchings gives $38,430$ distinct subsets of the $64$ neighbourhood masks. For each representative, the associated list contains $759$ obstructions. The finite checks verify the two support rules, every terminal conflict, and all fixed-pair conflicts. These are precisely the local conditions used in the induction above.

\subsection{Boolean encoding and symmetry}\label{sec:cnf}
The obstruction lists reduce the two finite assertions to a selection problem: can at most $2k$ neighbourhoods contain an obstruction for every oriented matching? We express this question as a Boolean formula.

Let $k\in\{3,4\}$ and $h=2^{2k}$. For each mask $v\in\{0,\ldots,h-1\}$ introduce a Boolean variable $s_v$, with $s_v=1$ meaning that $v$ belongs to the selected family. For each obstruction $C$ in the union of the matching lists, introduce a variable $c_C$ and require
\begin{equation}\label{eq:implication}
                c_C\Longrightarrow s_v\qquad(v\in C).
\end{equation}
For every representative matching $T$, require
\begin{equation}\label{eq:cover}
                       \bigvee_{C\text{ an obstruction of }T}c_C.
\end{equation}
Finally impose $\sum_{v=0}^{h-1}s_v\le2k$. Thus each matching must have an obstruction whose neighbourhoods all belong to the selected family.

We also use the symmetry under relabelling the $2k$ vertices. A permutation $p$ acts on masks by
\begin{equation}\label{eq:mask-action}
 p_*(v)=\sum_{a=0}^{2k-1}\varepsilon_a(v)2^{p(a)},
 \qquad \varepsilon_a(v)\in\{0,1\},
\end{equation}
where $\varepsilon_a(v)$ is the $a$th binary digit of $v$. Reading masks from $h-1$ down to $0$, require $(s_v)_v$ to be lexicographically at least $(s_{p_*(v)})_v$ for each adjacent transposition and each nontrivial cyclic rotation. These give $10$ comparisons when $k=3$ and $14$ when $k=4$. Every family can be relabelled so that its selector vector is largest in its full permutation orbit. The additional comparisons therefore preserve the existence of a family obstructing every matching.

Let $F_k$ be the Boolean formula specified in Appendix~\ref{app:clauses}, which expresses these requirements as a conjunction of clauses using auxiliary variables for the cardinality bound and the lexicographic comparisons.

\begin{proposition}[From an obstructing family to a formula]\label{prop:encoding}
For $k=3,4$, the formula $F_k$ is satisfiable if and only if there is a family of at most $2k$ neighbourhoods on a $2k$-element set with no feasible oriented matching.
\end{proposition}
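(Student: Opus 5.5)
We prove the two implications separately. The bridge is the fact, established in Sections~\ref{sec:catalogue} and~\ref{sec:three-certificate}, that for each representative matching $T$ a set of neighbourhoods is infeasible for $T$ if and only if it contains a listed obstruction of $T$. For $k=4$ this is Lemma~\ref{lem:cores}. For $k=3$ it is the support-closure characterisation, valid for families of at most six rows.

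\emph{Obstructing family $\Rightarrow$ satisfiable.} Let $\mathcal F$ be a family of at most $2k$ neighbourhoods with no feasible oriented matching. We may assume its members are distinct, since duplicates can be removed as remarked after Lemma~\ref{lem:blocks}.

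First, we apply a relabelling $p$ of the $2k$ vertices so that the selector vector $(s_v)$ is lexicographically largest in its full orbit. Infeasibility is preserved, because relabelling vertices permutes oriented matchings and carries obstructions to obstructions. A maximal element satisfies, in particular, the lexicographic comparisons for the adjacent transpositions and rotations.

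Next, we set $s_v=1$ exactly for $v\in\mathcal F$. The cardinality bound then holds. For each representative $T$, the family is infeasible for $T$, so it contains some listed obstruction $C_T$. We set $c_C=1$ exactly for the obstructions contained in $\mathcal F$. This satisfies \eqref{eq:implication}, and \eqref{eq:cover} holds because $C_T$ is among them.

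Finally, the auxiliary variables of the cardinality encoding and of the lexicographic encoding are given their intended values. The soundness of those encodings is a routine check against Appendix~\ref{app:clauses}.

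\emph{Satisfiable $\Rightarrow$ obstructing family.} Given a satisfying assignment, let $\mathcal F=\{v:s_v=1\}$. The cardinality clauses are sound, so $|\mathcal F|\le2k$. For any oriented matching, we first reduce it to a representative $T$. This uses simultaneous reversal, which preserves feasibility, and reordering of pairs, which only permutes coordinates. Clause \eqref{eq:cover} then gives some $C$ with $c_C=1$, and \eqref{eq:implication} gives $C\subseteq\mathcal F$. Since $C$ is an obstruction, $\mathcal F$ is infeasible for $T$, and hence for the original matching. This direction ignores the symmetry clauses entirely.

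\emph{Main obstacle.} The delicate step is the completeness of the obstruction lists, which the first implication needs. For $k=4$, completeness follows from Lemma~\ref{lem:cores} together with the minimality discussion in Section~\ref{sec:catalogue}. For $k=3$, it rests on the support induction of Section~\ref{sec:three-certificate}. There we must check that pruning to supports of size at most six and discarding non-minimal supports never loses a deduction available inside a family of at most six rows. We also check that identifying rows with masks is harmless: a family of distinct masks gives distinct rows, and rows with identical neighbourhoods were already removed. Everything else is bookkeeping about the clause translation.
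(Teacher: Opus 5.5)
Your proposal is correct and follows the paper's proof essentially step for step. In the forward direction you relabel to a lexicographically maximal selector vector, set $c_C=1$ exactly for the obstructions contained in the family, and give the auxiliary variables their intended values. In the converse direction you read off the selected masks and transfer infeasibility from the representatives by coordinate permutations and simultaneous reversal.

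The one item you leave implicit is the redundant all-positive initial clause of Appendix~\ref{app:clauses}. The paper notes it is satisfied because \eqref{eq:cover} forces some obstruction variable to be true. Your extra remarks on removing duplicates and on the completeness of the obstruction lists agree with what the paper establishes in Sections~\ref{sec:catalogue} and~\ref{sec:three-certificate}.
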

\begin{proof}
Suppose such a family exists. Relabel the vertices so that its selector vector is largest in its permutation orbit. For every obstruction $C$, assign $c_C=1$ exactly when $C$ is contained in the family. Each matching fails, so its list contains an obstruction and~\eqref{eq:cover} holds. The membership implications~\eqref{eq:implication} also hold. The family has at most $2k$ members and satisfies every lexicographic comparison, so the auxiliary variables can be assigned as in Appendix~\ref{app:clauses}. The additional initial clause specified there holds because~\eqref{eq:cover} forces a positive obstruction variable. Hence $F_k$ is satisfiable.

Conversely, a satisfying assignment selects at most $2k$ neighbourhoods. For each representative matching,~\eqref{eq:cover} supplies a true obstruction variable, and~\eqref{eq:implication} places all its rows in the selected family. Hence every representative fails. Coordinate permutations and simultaneous reversal cover all oriented matchings, so none is feasible.
\end{proof}

\subsection{Checking the contradictions}\label{sec:rup}
Both formulas are unsatisfiable. Each certificate consists of clause additions verified by one elementary rule: if assigning every literal of a proposed clause false leads to a contradiction by unit propagation, the current formula entails that clause. Unit propagation uses a clause whose other literals are false to force its remaining literal. Adding an entailed clause preserves satisfiability. A sequence ending in the empty clause therefore proves unsatisfiability. This is the reverse-unit-propagation (RUP) rule; Appendix~\ref{app:verification} gives its soundness argument and a small example.

For $F_4$, the certificate has $34,482$ additions, including its final empty clause. For $F_3$, it suffices to retain the $141,926$ original clauses used in the derivation of a contradiction. The supplied refutation of this subformula has $3,305$ additions. Every retained clause is identified in $F_3$, with its original variable numbering unchanged. A satisfying assignment of $F_3$ would satisfy this subformula, so its refutation proves the required contradiction.

The verification command checks the support and obstruction data, regenerates the mask catalogues and formulas, and verifies their agreement with the supplied files. It then checks the clause additions. The three-pair certificate includes lists of clauses specifying each unit-propagation derivation. These lists shorten verification; the checker verifies each indicated inference. The completeness of the obstruction lists and both refutations also have Lean proofs, together with the implications from obstructing families to satisfying assignments of the refuted formulas. Reproduction instructions are in Appendix~\ref{app:verification}.

Proposition~\ref{prop:encoding} now proves Lemmas~\ref{lem:three} and~\ref{lem:four}. A graph with repeated neighbourhoods has no more distinct masks than vertices in its second part. After choosing a feasible matching for those masks, restore the repetitions as explained after Lemma~\ref{lem:blocks}. That lemma gives the balanced bound for $k=3,4$. Combining these cases with Section~\ref{sec:prob} proves
\begin{equation}\label{eq:balanced}
        |A|=|B|=2k,\quad k\ge3\quad\Longrightarrow\quad\RN(G)\le k.
\end{equation}

\section{All remaining partite sizes}\label{sec:all}
The same construction yields the bound for the remaining part sizes. Let $m=|A|\le|B|=n$.

\subsection{Adding one permutation}
The construction of Mozhui and Krishna~\cite[Section~2]{MK} gives the following bound by appending one permutation.

\begin{lemma}\label{lem:baseline}
If $|A|=2k$ with $k\ge2$, then every bipartite graph on $A\sqcup B$ is $(k+1)$-representable.
\end{lemma}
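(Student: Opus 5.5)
We reuse the word of Lemma~\ref{lem:blocks} and append a single permutation of $B$ that takes care of all pairs in $B$. Fix any oriented perfect matching $(x_1,y_1),\ldots,(x_k,y_k)$ of $A$ and complete every star to $0$. Take $\pi_1,\ldots,\pi_k$ to be permutations of $B$ that are nondecreasing in their rank coordinates; ties are broken by one fixed linear order $\sigma$ of $B$. Form $W_i$ and $D_i$ as in \eqref{eq:block} and \eqref{eq:word}, and set
\[
 w'=W_1D_1\cdots W_kD_k\,\rev(\sigma').
\]
Here $\sigma'$ is chosen so that every pair of $B$ letters already appears in opposite relative orders somewhere. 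The simplest choice is to take all $\pi_i$ equal to some permutation $\tau$ of $B$ with respect to nothing, which fails because ranks force the order. So instead we append a permutation $\pi_{k+1}=\rev(\pi_1)$, where $\pi_1$ is itself a concatenation.

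Each $b$ then occurs $k+1$ times. Each $x_i,y_i$ occurs $k$ times, which is one too few, so the $A$ letters must also appear in the extra block. The extra block should therefore be a permutation of $A\cup B$ as well. We place all of $A$ first, in an order making pairs from different matching pairs still nonalternating, and then $\rev(\pi_1)$. For $k=2$ the intervening words $D_i$ are empty, because $\{1,\ldots,k\}\setminus\{i,i+1\}=\varnothing$. In that case the extra block also supplies the reversed order $y_ix_i$ needed for the same-pair nonadjacency. We would therefore put the $A$ letters in the extra block as $y_1\cdots y_kx_1\cdots x_k$.

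The verification proceeds pair by pair, as in Lemma~\ref{lem:blocks}, using cyclic rotation freely.
\begin{enumerate}[label=(\alph*)]
\item For $B$ pairs, the restriction is $\pi_1\cdots\pi_k\rev(\pi_1)$. Every pair occurs in both orders, so none alternate.
\item For $x_i,y_i$, the order $y_ix_i$ in the extra block breaks alternation.
\item For letters from different pairs, the two occurrences of each letter in its own block still contain nothing of the other pair.
\item For $a\in A$ and $b\in B$, the extra block places $a$ before $b$. We must check that this preserves the alternation pattern of $w$ for adjacent pairs and preserves the repeated letters for nonadjacent ones.
\end{enumerate}
The main obstacle is (d). We would rotate so that the extra block sits immediately before $W_i$, where $a\in\{x_i,y_i\}$. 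Cyclically, the extra block contributes ``$a$ then $b$'' just before $W_i$. For adjacent $a,b$ the local pattern in $W_i$ begins with $a$ and contains $b$ between the two copies of $a$. The resulting pattern is then $ab\,a b a\,b\cdots$, but this has two $a$'s around a single $b$ only if ordered correctly. If the pattern breaks, we would instead place the extra block between $D_{i-1}$-type positions, namely adjacent to a block omitting $a$, or choose the $B$ part of the extra block before its $A$ part. The details of this placement follow \cite[Section~2]{MK}, which we would cite for the exact arrangement.
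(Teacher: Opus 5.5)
Your overall plan is the paper's. You build $W_1D_1\cdots W_kD_k$ with every star completed to $0$, append one permutation of $A\cup B$ whose $B$-part is $\rev(\pi_1)$, and note that nonedges already witnessed by consecutive equal letters survive any appending. Items (a)--(c) are fine.

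\textbf{The gap is (d).} It is the only nontrivial check in the lemma, and the extra block you actually specify, $y_1\cdots y_kx_1\cdots x_k\,\rev(\pi_1)$, makes it fail.
\begin{itemize}
\item Take an edge $ab$ with $a$ in pair $i\ge2$. The block $W_1$ contains $b$ but no letter of pair $i$, so the base restriction to $\{a,b\}$ starts with $b$. It is alternating with $k$ copies of each letter, so it ends with $a$. Appending $a$ before $b$ then creates $aa$.
\item Concretely, take $k=2$ and $b$ adjacent only to $x_2$, so $r(b)=(0,1)$. The restriction becomes $bx_2bx_2\,x_2b$.
\item Reversing the block to $\rev(\pi_1)$ followed by the $A$ letters does not help. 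For $a\in\{x_1,y_1\}$ adjacent to $b$, the base restriction is $ab\cdots ab$: it begins with $a$ and ends with $b$. Appending $b$ before $a$ then creates $bb$. For example, with $r(b)=(1,0)$ and $k=2$ you get $x_1bx_1b\,bx_1$.
\end{itemize}

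\textbf{The missing idea} is that the relative order of $a$ and $b$ in the appended permutation must depend on the pair. The letters of pair $1$ must precede $\rev(\pi_1)$, and those of pairs $2,\ldots,k$ must follow it. The paper appends $y_1x_1\,\rev(\pi_1)\,y_2x_2\cdots y_kx_k$. This also keeps the order $y_ix_i$, which handles same-pair nonedges when $k=2$ and the intervening words are empty.

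Your proposed remedies do not supply this. Rotating the word is only a change of viewpoint; it cannot place the single extra block differently for different $a$. Deferring ``the exact arrangement'' to \cite{MK} leaves undone exactly the step that needs proof.
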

\begin{proof}
Use the blocks and intervening words \eqref{eq:block}--\eqref{eq:word}, but choose arbitrary permutations of $B$ that are nondecreasing in their respective rank coordinates and assign every star rank $0$. The completed rows need not satisfy the hypothesis of Lemma~\ref{lem:orders}. Append the permutation
\begin{equation}\label{eq:extra}
               y_1x_1\,\rev(\pi_1)\,y_2x_2\cdots y_kx_k.
\end{equation}
Every letter now occurs $k+1$ times. Every nonedge between different $A$ pairs, or between $A$ and $B$, already has equal consecutive letters in the base restriction; appending letters cannot remove this obstruction. The same-pair order $y_i x_i$ in \eqref{eq:extra} breaks the order $x_i y_i x_i y_i$ of its own block, including when $k=2$ and the intervening words are empty. A $B$ pair that has not already failed alternation has constant order across the base block permutations, and reversing $\pi_1$ breaks that order.

For an edge incident with pair $1$, its alternating base restriction begins with the $A$ letter; the $A$ letter also precedes the $B$ letter in~\eqref{eq:extra}, so alternation continues. For an edge incident with any other pair, the base restriction begins with $B$, because $W_1$ supplies its first occurrence before the first copy of that $A$ letter. The $B$ letter also precedes the $A$ letter in~\eqref{eq:extra} for these pairs. Hence all edges keep alternating.
\end{proof}

Padding the smaller part with isolated vertices gives $\RN(G)\le1+\ceil{m/2}$ whenever $m\ge3$. More generally, padding any $m\le4$ to four gives $\RN(G)\le3$, enough for $m+n\ge9$.

If $m=2k\ge6$ and $n>m$, then $\ceil{(m+n)/4}\ge k+1$, so Lemma~\ref{lem:baseline} suffices. If $n=m$, use \eqref{eq:balanced}.

If $m=2k-1\ge5$ and $n\ge m+3$, then $\ceil{(m+n)/4}\ge k+1$, and padding to $2k$ followed by Lemma~\ref{lem:baseline} again suffices. It remains to consider an odd smaller part and a larger part whose size differs from it by at most two.

\subsection{Odd parts of nearly equal size}
Mozhui and Krishna~\cite[Theorem~3.1]{MK} proved that $\RN(G)\le\ceil{m/2}$ whenever the part sizes satisfy $5\le m\le n$ and $m$ is odd. For the remaining part sizes, the distinguished-coordinate lemma gives a short proof of this bound, including when $m=5$. The choice of one vertex will exclude every possible obstruction; all other pairs and their orientations can then be chosen arbitrarily.

\begin{lemma}\label{lem:odd-near-balanced}
Let $k\ge3$. Every bipartite graph with parts $A,B$ satisfying $|A|=2k-1$ and $|B|\le2k+1$ has a $k$-uniform representation.
\end{lemma}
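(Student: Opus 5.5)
Pad $A$ with an isolated vertex $z$ to size $2k$, so that Lemma~\ref{lem:blocks} applies with $|A\cup\{z\}|=2k$. Deleting $z$ from the resulting $k$-uniform word represents $G$, since $k\ge3$. We pair $z$ with a vertex $a\in A$ in the order $(a,z)$ and call this coordinate $j$. Every initial row then has entry $1$ at $j$ if $b\sim a$ and $\st$ otherwise. This is exactly the hypothesis of Lemma~\ref{lem:special-coordinate}. Hence feasibility fails only through a triple $L,c,U$ with the following properties.
\begin{itemize}
\item The rows $L,U$ are fixed, and both $L$ and $U$ are adjacent to $a$.
\item The row $c$ has its single star at $j$, so $c\not\sim a$, and $c$ has rank $2$ at every other coordinate. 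Thus $c$ is adjacent to all of $A\setminus\{a\}$.
\item At each other pair $(x_i,y_i)$, the row $L$ is adjacent to exactly $x_i$ and $U$ to exactly $y_i$.
\end{itemize}
Once $a$ is chosen, the remaining pairs and orientations are arbitrary. The whole task is therefore to choose $a$ so that no such triple can occur for any completion of the matching.

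The key observation is that the central row $c$ has neighbourhood exactly $A\setminus\{a\}$. Moreover, $L$ and $U$ partition $A\setminus\{a\}$ between them: their neighbourhoods are $\{a\}\cup S$ and $\{a\}\cup S'$, where $S\sqcup S'=A\setminus\{a\}$ and each of $S,S'$ contains exactly one vertex from each of the $k-1$ other pairs. So we choose $a$ to avoid any vertex $b\in B$ with $N(b)=A\setminus\{a\}$.

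Call $a$ \emph{bad} if some $b\in B$ has $N(b)=A\setminus\{a\}$. Distinct bad vertices $a$ need distinct such $b$. If some $a\in A$ is not bad, pick it, and no obstruction exists.

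Otherwise every one of the $2k-1$ vertices of $A$ is bad. This uses $2k-1$ distinct vertices $b_a\in B$, with $N(b_a)=A\setminus\{a\}$, leaving at most two further vertices of $B$. In this residual case I would pick any $a$ and argue that a triple $(L,c,U)$ needs two extra vertices $L,U\in B$, both adjacent to $a$, whose neighbourhoods have size $k$ and split $A\setminus\{a\}$ compatibly with the matching.

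Among the rows $b_{a'}$ with $a'\ne a$, each is adjacent to $a$ but has degree $2k-2>k$ (since $k\ge3$). So each has rank $2$ at most coordinates and cannot serve as $L$ or $U$. Hence $L,U$ must be the at most two remaining vertices, and they must be complementary on $A\setminus\{a\}$.

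We still have freedom in the matching of $A\setminus\{a\}$. An obstruction needs every pair to be split with its first endpoint in $N(L)$ and its second in $N(U)$. Choosing one pair inside $N(L)\setminus\{a\}$, which is possible since it has $k-1\ge2$ vertices, destroys this split. Alternatively, choose $a$ outside $N(L)\cap N(U)$. Either way all obstructions are excluded, and Lemma~\ref{lem:blocks} finishes the proof.

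The main obstacle is this residual case where every $a$ is bad. There I must check carefully that the rows $b_{a'}$ cannot act as fixed rows $L$ or $U$, and that a matching breaking the complementary split always exists, including the boundary case $k=3$.
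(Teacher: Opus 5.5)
Your proof is correct and follows the paper's argument. You adjoin $z$, pair it as $(a,z)$, and invoke Lemma~\ref{lem:special-coordinate} to reduce any obstruction to a centre with neighbourhood $A\setminus\{a\}$ plus two fixed degree-$k$ rows adjacent to $a$; you then count that only two vertices of $B$ remain once every $A\setminus\{a\}$ occurs. The one difference is in that residual case: the paper chooses $a$ outside the neighbourhood of one degree-$k$ vertex and leaves the matching arbitrary, whereas your main option keeps $a$ arbitrary and pairs two vertices of $N(p)\setminus\{a\}$ so that $p$ gets rank $2$ there. Both routes work, and your stated alternative is exactly the paper's choice.
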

\begin{proof}
We first choose $a\in A$. If the neighbourhood $A\setminus\{a\}$ is absent from $B$ for some $a$, choose that vertex. Otherwise select one vertex of $B$ for each of the $2k-1$ distinct neighbourhoods $A\setminus\{a\}$. These selected vertices have degree $2k-2\ne k$. At most two other vertices of $B$ remain, counting multiplicity. Hence at most two vertices have degree $k$. If there is at most one, choose any $a$. If there are two, choose $a$ outside the neighbourhood of one of them; this is possible since $k<2k-1$. In every case, our choice ensures that either $A\setminus\{a\}$ is absent or no two distinct degree-$k$ vertices of $B$ are both adjacent to $a$.

Adjoin an isolated vertex $z$ to $A$ and make $(a,z)$ the first oriented pair. Pair and orient the remaining vertices arbitrarily. Every row has rank $1$ or a star in the first coordinate. By Lemma~\ref{lem:special-coordinate}, infeasibility would require a central vertex $c$ with neighbourhood $A\setminus\{a\}$ and two distinct fixed vertices $L,U$. Each fixed vertex would be adjacent to $a$ and exactly one endpoint of every other pair, so both would have degree $k$. The choice of $a$ excludes this possibility.

The matching is therefore feasible. Lemma~\ref{lem:blocks} gives a $k$-uniform representation of the enlarged graph, and deleting $z$ gives the required representation.
\end{proof}

For the remaining part sizes $m=2k-1\ge5$ and $m\le n\le m+2$, we have $\lceil(m+n)/4\rceil=k$. Lemma~\ref{lem:odd-near-balanced} therefore gives the desired bound. Together with the even cases and the bound for $m\le4$, this covers every bipartition with $m+n\ge9$ and proves Theorem~\ref{thm:main}.\hfill$\square$

\paragraph{\textbf{Acknowledgments.}}
The authors thank the Isaac Newton Institute for Mathematical Sciences, Cambridge, for support and hospitality during the programme \emph{Geometric spectral theory and applications}, where this work was undertaken. This work was supported by EPSRC grant EP/Z000580/1.

\paragraph{\textbf{AI declaration.}}

The underlying construction using paired vertices comes from Mozhui and Krishna. Our principal advance is to choose the orders within its blocks so that the additional permutation becomes unnecessary in the previously unresolved case of two parts of equal even size. This approach arose through discussions between the authors and ChatGPT 5.5 and 5.6. For at least four pairs, a key structural insight is that a long contradictory chain would require four strictly increasing ranks at a coordinate where only $1,2,3$ are available. This yields the obstruction classification used in the probability estimates and the four-pair certificate. The classification also emerged from discussions between the authors and ChatGPT 5.6, after the authors directed attention to the implications between the possible assignments of the unspecified ranks. Building on this classification, the authors directed further discussions with ChatGPT 6 towards the remaining odd part sizes. These discussions led to the distinguished-coordinate argument: adjoining an isolated vertex restricts the possible obstructions, and a suitable choice of its partner excludes those that remain. We used ChatGPT 5.6 through Codex to assist with combinatorial counting and with the construction and verification of the finite computational certificates. After developing the manuscript, we used ChatGPT 6 through Codex to assist with a Lean formalisation of the proof. The authors reviewed the formal statements against the definitions, hypotheses and conclusions in the manuscript. Lean Comparator was used to check the formal proofs against separately reviewed theorem statements, and the axiom dependencies of the final theorems were inspected to exclude proof gaps, additional unproved axioms and native-execution trust. The formalisation also informed subsequent revisions to the mathematical arguments and their exposition. All content in the paper was reviewed and adopted by the authors, who take full responsibility for it.

\appendix
\section{The Boolean encoding}\label{app:clauses}
This appendix specifies the formula $F_k$ used in Section~\ref{sec:cnf}. A literal is a Boolean variable or its negation. A clause is a disjunction of literals, and a formula in conjunctive normal form (CNF) is a conjunction of clauses. We use $1$ and $0$ for the Boolean constants true and false.

The membership implications~\eqref{eq:implication} give the clauses
\[
                         \neg c_C\lor s_v\qquad(v\in C),
\]
and~\eqref{eq:cover} is already a clause for each representative matching. The remaining clauses express the cardinality bound and the lexicographic comparisons.

\subsection{Counting the selected neighbourhoods}
Let $k\in\{3,4\}$ and $h=2^{2k}$. Introduce variables $q_{i,j}$ for $1\le i\le h$ and $1\le j\le\min\{i,2k+1\}$. The condition $q_{i,j}=1$ means that at least $j$ of $s_0,\ldots,s_{i-1}$ are true. With boundary values $q_{i,0}=1$ for $0\le i\le h$ and $q_{i,j}=0$ for $i<j\le2k+1$, the recurrence is
\begin{equation}\label{eq:counter}
 q_{i,j}\ \longleftrightarrow\
 \bigl(q_{i-1,j}\lor(s_{i-1}\land q_{i-1,j-1})\bigr).
\end{equation}
The final clause is $\neg q_{h,2k+1}$. Thus thresholds through $7$ or $9$ suffice to exclude a seventh or ninth selected neighbourhood, respectively.

For Boolean literals or constants $z,u,x,w$, the equivalence $z\leftrightarrow\bigl(u\lor(x\land w)\bigr)$ is expressed by the four clauses
\begin{equation}\label{eq:counterclauses}
 (\neg u\lor z),\quad
 (\neg x\lor\neg w\lor z),\quad
 (\neg z\lor u\lor x),\quad
 (\neg z\lor u\lor w).
\end{equation}
The first two give the implication from right to left; the last two give the converse, since $(u\lor x)\land(u\lor w)=u\lor(x\land w)$. Substituting
\[
 z=q_{i,j},\quad u=q_{i-1,j},\quad x=s_{i-1},\quad w=q_{i-1,j-1}
\]
gives~\eqref{eq:counter}. Induction on $i$ proves the stated meaning of every counter variable. The clause $\neg q_{h,2k+1}$ therefore imposes exactly $\sum_{v=0}^{h-1}s_v\le2k$.

The implementation represents false by one variable $f$ together with the clause $\neg f$, and true by $\neg f$.

\subsection{Lexicographic comparisons}
Fix one of the permutations used in Section~\ref{sec:cnf}. Read the selector bits in decreasing order of their masks. The literal $e$ is true exactly when all earlier bit pairs agree; initially it is true. At a nonfixed position, let $x$ be the current selector and $y$ its permuted selector. The clause
\[
                         \neg e\lor x\lor\neg y
\]
excludes a first difference of the form $(x,y)=(0,1)$. Introduce a new variable $e'$ for equality of the extended prefix, with
\begin{equation}\label{eq:prefix}
                        e'\longleftrightarrow\bigl(e\land(x\leftrightarrow y)\bigr).
\end{equation}
This equivalence is encoded by
\begin{align*}
 &\neg e'\lor e, &&\neg e'\lor\neg x\lor y,
       &&\neg e'\lor x\lor\neg y,\\
 &\neg e\lor\neg x\lor\neg y\lor e',
       &&\neg e\lor x\lor y\lor e'.
\end{align*}
The first three clauses prove the forward implication. For the converse, if $e$ is true and $x=y$, one of the last two clauses forces $e'$; otherwise $e'$ must be false. Fixed positions need no clause or auxiliary variable.

The largest selector vector in each full permutation orbit satisfies every comparison. Relabelling preserves the cardinality of a family and the property of obstructing every matching, so these clauses preserve the existence of such a family.

\subsection{The complete formulas}\label{app:formula-dimensions}
The formula $F_k$ consists of the membership clauses, the matching clauses~\eqref{eq:cover}, the counter clauses, the symmetry clauses and the clause $\neg f$. The implementation also includes an initial clause containing every variable positively. This clause is redundant, since~\eqref{eq:cover} forces a positive obstruction variable.

The full formulas have the following dimensions. The last column counts variables associated with obstructions.
\begin{center}
\small
\begin{tabular}{crrrrrrr}
\toprule
$k$ & Clauses & Variables & Selectors & Constant & Counters & Prefixes & Obstructions\\
\midrule
3&222,795&39,382&64&1&427&460&38,430\\
4&38,240&10,116&256&1&2,268&2,656&4,935\\
\bottomrule
\end{tabular}
\end{center}
In each row, the five variable classes sum to the total. The larger three-pair formula reflects the longer implication paths that can occur in three coordinates.

\section{Verification and reproduction}\label{app:verification}
The accompanying repository~\cite{CDrepo} contains the Lean proofs in \texttt{lean/} and the finite data and verification programs in \texttt{validation/}. All file paths below are relative to the repository root. The program \texttt{validation/verify.py} checks the two certificate arguments of Section~\ref{sec:finite} and the exact arithmetic of Section~\ref{sec:prob}.

\subsection{From rank patterns to clauses}
For four pairs, the generator lists the $81$ pair and $24$ triple patterns for each representative matching. For three pairs, the support construction in Section~\ref{sec:three-certificate} supplies the obstruction lists. Verification checks direct forcing, closure under each implication, terminal conflicts and fixed-pair conflicts, together with the deductions represented by the supports. Each rank row determines a unique neighbourhood mask for a fixed oriented matching, so these checks transfer to the lists of mask subsets. In the initial-rank data, the integer $0$ denotes $\st$; it is distinct from the completed rank $0$ used for the first insertion gap.

The formulas use one-based DIMACS variable identifiers. A positive integer denotes a variable, its negative denotes its negation, and $0$ terminates a clause. The variables are allocated in the order shown in the table in Appendix~\ref{app:formula-dimensions}: selectors, the false constant, counters, prefix equalities and obstructions. Thus selectors have identifiers $1,\ldots,h$, and the false variable has identifier $h+1$. A negative unit clause forces that variable to be false; its negative literal represents true.

For the symmetry clauses, the vertex permutations are $p=(a\ a+1)$ for $0\le a\le2k-2$ and $p(a)=a+r\pmod{2k}$ for $1\le r\le2k-1$. The compared bits are $s_v$ and $s_{p_*(v)}$, with $p_*$ as in~\eqref{eq:mask-action}. Tautological clauses are omitted and repeated literals within a clause are removed. These operations preserve its truth value. Regeneration checks the resulting clause lists against the exact formulas supplied to the certificate checkers.

The three-pair refutation uses a subformula of $F_3$. The file \path{validation/data/pairing6-provenance.json} identifies each retained clause in the full formula. Clause identifiers in the propagation lists are renumbered to refer to this subformula and the preceding additions; variable identifiers are unchanged. Checking this correspondence ensures that the contradiction applies to the formula constructed from neighbourhoods. The subformula and its refutation are \path{validation/data/pairing6.cnf} and \path{validation/data/pairing6.lrat}; the support and obstruction data are in \path{validation/data/three-pair-closure.json}. For four pairs, the formula and refutation are \path{validation/data/pairing8.cnf} and \path{validation/data/pairing8.rup.gz}.

\subsection{Soundness of certificate verification}
The verification uses the unit-propagation principle of Goldberg and Novikov~\cite{GN}. Suppose $F$ is the conjunction of the original formula and the additions already checked, and $C=\ell_1\lor\cdots\lor\ell_t$ is a proposed next clause. Assigning every $\ell_i$ false amounts to assuming $\neg C$. If unit propagation then gives a contradiction, $F\land\neg C$ is unsatisfiable, so $F$ entails $C$. Induction shows that every checked addition follows from the original formula. The final empty clause proves unsatisfiability.

For example, consider
\[
 (p\lor q)\land(p\lor\neg q)\land(\neg p\lor q)\land(\neg p\lor\neg q).
\]
The clause $p$ passes the test: assuming $\neg p$, the first two clauses force both $q$ and $\neg q$. Once $p$ has been added, the last two clauses yield a contradiction. The certificates use this same rule. The three-pair certificate additionally names the clauses to use in each propagation sequence; the checker verifies that each is unit under the assignments already derived, or gives the terminal contradiction.

The certificates were obtained with Z3~\cite{Z3}, version 4.13.3. Their verification requires only the formulas and the clause additions. The four-pair certificate is checked by two implementations, one using watched literals and the other using occurrence counts. For three pairs, the supplied propagation lists permit a direct check of each inference. The Lean development separately checks propositional proofs of the contradictions and the implications from obstructing families to satisfying assignments.

\subsection{Reproduction}
With Python 3.10 or later and a GCC-compatible C++17 compiler available, run from the repository root:
\begin{Verbatim}[fontsize=\small]
python3 validation/verify.py
\end{Verbatim}
The command checks the support and obstruction data, regenerates both formulas, checks the three-pair subformula correspondence, verifies both refutations, and checks every entry of the probability tables together with the rational bounds. No solver or internet connection is needed for these checks. The option \texttt{--cxx} specifies an alternative compiler executable. The file \path{validation/README.md} describes the individual checks, and \path{validation/verification.json} contains their results.

\subsection{Lean formalisation}
The file \path{lean/lean-toolchain} fixes the Lean version at 4.33.1, and \path{lean/lakefile.toml} and \path{lean/lake-manifest.json} fix the library versions. The file \path{lean/Crown/Main.lean} contains Theorem~\ref{thm:main}, Corollary~\ref{cor:crown} and the corresponding statement for uniform word length. The exact crown representation numbers are proved in \path{lean/Crown/CrownValues.lean}. The formal proofs of the finite graph assertions are in \path{lean/Crown/CertificateSixGraph.lean} and \path{lean/Crown/CertificateGraph.lean}; \path{lean/Crown/OddGraphs.lean} treats the odd part sizes, including the five-by-seven case. The file \path{docs/mathematics.md} gives the correspondence between the remaining arguments and their Lean modules.

After installing Lean through Elan, the complete development can be compiled from the repository root by
\begin{Verbatim}[fontsize=\small]
cd lean
lake exe cache get
lake build Solution
\end{Verbatim}
The repository README gives the installation requirements. The file \path{lean/Solution.lean} assembles the main results and supporting lemmas. Lean Comparator checks all statements listed in \path{lean/full-comparator.json} against the separate specifications assembled in \path{lean/Challenge.lean}, and the exported proofs are checked by the Lean kernel. The proofs use only propositional extensionality, classical choice and quotient soundness. The file \path{tools/README.md} gives the commands for reproducing the comparison and axiom checks.


\small
\begin{thebibliography}{99}
\raggedright
\bibitem{AKGZ} \"O. Akg\"un, I. Gent, S. Kitaev and H. Zantema,
\emph{Solving computational problems in the theory of word-representable graphs},
J. Integer Seq. 22 (2019), Article 19.2.5.
\href{https://cs.uwaterloo.ca/journals/JIS/VOL22/Kitaev/kitaev11.html}{Journal version}.

\bibitem{CDrepo} M. J. Colbrook and C. Drysdale,
\emph{Crown graphs maximise the representation number of bipartite graphs: formal proofs and certificates},
software and data, 2026.
\url{https://github.com/miinadietrich/crown_graphs_maximise}.

\bibitem{GKP} M. Glen, S. Kitaev and A. Pyatkin,
\emph{On the representation number of a crown graph},
Discrete Appl. Math. 244 (2018), 89--93.
\href{https://doi.org/10.1016/j.dam.2018.03.013}{doi:10.1016/j.dam.2018.03.013}.

\bibitem{GN} E. Goldberg and Y. Novikov,
\emph{Verification of proofs of unsatisfiability for CNF formulas},
in \emph{Proceedings of the Design, Automation and Test in Europe Conference and Exhibition},
IEEE Computer Society, 2003, pp.~886--891.
\href{https://doi.org/10.1109/DATE.2003.1253718}{doi:10.1109/DATE.2003.1253718}.

\bibitem{HHMO24} Z. Hefty, P. Horn, C. Muir and A. Owens,
\emph{Word-representable graphs: orientations, posets, and bounds},
Electron. J. Combin. 31 (2024), no.~4, Paper P4.2.
\href{https://doi.org/10.37236/12806}{doi:10.37236/12806}.

\bibitem{HHMO26} Z. Hefty, P. Horn, C. Muir and A. Owens,
\emph{Word-representation numbers of graphs: bottlenecks and bounds},
J. Combin. Theory Ser. A 223 (2026), Article 106215.
\href{https://doi.org/10.1016/j.jcta.2026.106215}{doi:10.1016/j.jcta.2026.106215}.

\bibitem{KL} S. Kitaev and V. Lozin,
\emph{Words and Graphs}, Monographs in Theoretical Computer Science.
An EATCS Series, Springer, Cham, 2015.
\href{https://doi.org/10.1007/978-3-319-25859-1}{doi:10.1007/978-3-319-25859-1}.

\bibitem{Z3} L. de Moura and N. Bj{\o}rner,
\emph{Z3: an efficient SMT solver},
in \emph{Tools and Algorithms for the Construction and Analysis of Systems},
Lecture Notes in Computer Science 4963, Springer, Berlin, 2008, pp.~337--340.
\href{https://doi.org/10.1007/978-3-540-78800-3_24}{doi:10.1007/978-3-540-78800-3\_24}.

\bibitem{MK} K. Mozhui and K. V. Krishna,
\emph{On the conjecture of the representation number of bipartite graphs},
preprint, 2025. \href{https://arxiv.org/abs/2506.01057}{arXiv:2506.01057v1}.

\bibitem{MKperm} K. Mozhui and K. V. Krishna,
\emph{An upper bound for the permutation-representation number of bipartite graphs},
J. Inf. Process. 33 (2025), 1033--1041.
\href{https://doi.org/10.2197/ipsjjip.33.1033}{doi:10.2197/ipsjjip.33.1033}.

\end{thebibliography}
\end{document}